\documentclass[11pt,reqno,english,a4]{amsart}
\usepackage{amsmath,amsthm,amsfonts,amssymb,amscd}
\usepackage[latin1]{inputenc}
\usepackage{psfrag}
\usepackage{epsfig}
\usepackage{a4wide}
\usepackage[]{graphicx}
\usepackage{times}
\usepackage{bm}
\usepackage{mathdots}
\usepackage{mathrsfs}

\newtheorem{theorem}{Theorem}[section]
\newtheorem{proposition}[theorem]{Proposition}
\newtheorem{lemma}[theorem]{Lemma}
\newtheorem{corollary}[theorem]{Corollary}

\newtheorem{remark}[theorem]{Remark}

\newcommand{\R}{\mathbb{R}}

\newcommand{\Pn}{\mathbb{P}}

\newcommand{\Aff}{\mathbb{A}\mathrm{ff}}

\newcommand{\el}{\mathscr{L}}

\newcommand{\Conf}{\mathrm{Conf}}

\newcommand{\Iso}{\mathrm{Iso}}

\newcommand{\Sn}{\mathbb{S}}
\newcommand{\En}{\mathbb{E}}
\newcommand{\Hn}{\mathbb{H}}
\newcommand{\g}{\mathfrak{g}}
\newcommand{\h}{\mathfrak{h}}

\newcommand{\so}{\mathfrak{so}}

\newcommand{\Nu}{\mathfrak{N}}
\newcommand{\gi}{\mathsf{g}}

\begin{document}
	
	\title[Cohomogeneity one conformal actions on the three-dimensional essential Riemannian spaces]
	{Cohomogeneity one conformal actions on the three-dimensional essential Riemannian spaces}

\author{P. Ahmadi}	
\author{M. Hassani}

\thanks{}

\keywords{Cohomogeneity one, conformal action, essential Riemannian space}

\subjclass[2010]{57S25, 37C85}

\date{\today}
\address{
	P. Ahmadi\\
	Departmental of mathematics\\
	University of Zanjan\\
	University blvd.\\
	Zanjan\\
	Iran}
\email{p.ahmadi@znu.ac.ir}

\address{
	M. Hassani\\
	Departmental of mathematics\\
	University of Zanjan\\
	University blvd.\\
	Zanjan\\
	Iran}
\email{masoud.hasani@znu.ac.ir}

\begin{abstract}
The aim of this paper is to classify the cohomogeneity one conformal actions on the  three-dimensional essential Riemannian spaces, up to orbit equivalence. Among other results, the representations of all connected Lie groups acting with cohomogeneity one or zero within the full conformal group of a given three-dimensional essential Riemannian space are determined, up to conjugacy, and the orbits are specified up to conformal equivalence.
\end{abstract}

\maketitle
\tableofcontents
\medskip
\medskip

\thispagestyle{empty}

%%%%%%%%%%%%%%%%%%%%%%%%%%%%%%%%%%%%%%%%
%%%%%%%%%%%%%%%%%%%%%%%%%%%%%%%%%%%%%%%%
%\tableofcontents

\section{Introduction and Preliminaries}\label{sec.itro}
An interesting problem in the literature of $G$-manifolds is to determine a list of groups admitting actions of the type under investigation. A number of researchers have tried to determine the groups which act isometrically on pseudo-Riemannian manifolds up to local isomorphism or at most up to isomorphism (see for example \cite{Adams1, Kow, Zeg, Zim}). Their aim was not to study the induced orbits and so they did not consider cohomogeneity assumption as a dynamical restriction. An action of a Lie group $G$ on an $n$-manifold $M$ is of \textbf{cohomogeneity $k$}, where $0\leqslant k\leqslant n$, if the maximum dimension among the induced orbits in $M$ is $n-k$. There are few papers about cohomogeneity one Lorentzian manifolds (see for instance \cite{AK1, AK, A3, BDV, vanei1, vanei2}), and many papers in the field of cohomogeneity one Riemannian manifolds (see \cite{Alee, AA, BD, BT, Br, MK, PT, PS, S}). In the later case, the common hypothesis is that the acting group is a closed Lie subgroup of the isometry group of the Riemannian manifold. This assumption causes an strong dynamical restriction, that is the action should be proper. In fact, it is proved that the action of $G$ on $M$ is proper if and only if there is a complete $G$-invariant Riemannian metric on $M$ (see \cite {Alee}). This theorem makes a link between proper actions and Riemannian $G$-manifolds. The orbits of a proper action are closed submanifolds, the isotropy subgroups are compact and the orbit space is Hausdorff (\cite{Adams1}). However, the closeness of the acting group in the conformal Lie group of a Riemannian manifold does not imply the properness of the action (see Proposition \ref{pro} and Theorem \ref{thm.p.1}). Although, conformal actions have been studied by many mathematicians, amount of results in the area of cohomogeneity one conformal actions is not satisfying (see \cite{Has, HA, MN}).

Given a pseudo-Riemannian manifold $(M,\gi)$ the corresponding \textbf{conformal class}, denoted by $[\gi]$, is the set of metrics on $M$ of the form $e^f\gi$ where $f:M\rightarrow \R$ is a smooth function. A diffeomorphism $\varphi:M\rightarrow M$ is said to be conformal if $\varphi$ preserves the conformal class $[\gi]$, i.e., the pullback metric $\varphi^*\gi$ belongs to $[\gi]$. A conformal structure $(M,[\gi])$ is called \textbf{essential} when $\Conf(M,[\gi])$, the group of conformal transformations of $(M,[\gi])$, does not preserve any metric in the metric class $[\gi]$.  Roughly speaking, $(M,[\gi])$ is essential if its conformal group is strictly "bigger" than the isometry group of every metric $\gi\in [\gi]$.

In 1996, J. Ferrand classified the essential Riemannian spaces by proving:
\begin{theorem}
Let $(M,\gi)$ be an essential Riemannian structure of dimension $n\geq 2$. Then $(M,\gi)$ is conformally diffeomorphic to
\begin{itemize}
\item $(\Sn^n,[\gi_{\Sn}])$ if $M$ is compact;
\item $(\En^n,[\gi_{\En}])$ if $M$ is not compact.
\end{itemize}
where $\gi_{\Sn}$ and $\gi_{\En}$ are the canonical metrics on the $n$-dimensional sphere $\Sn^n$ and Euclidean space $\En^n$, respectively.
\end{theorem}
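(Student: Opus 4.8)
The plan is to reduce the classification to a dynamical statement, extract conformal flatness from that dynamics, and then identify the global model by a developing map argument. Throughout, essentiality of $(M,[\gi])$ is the hypothesis that $\Conf(M,[\gi])$ preserves no metric in $[\gi]$, and the whole strategy is to exploit the ``extra'' conformal transformations this provides.

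First I would translate essentiality into the assertion that $\Conf(M,[\gi])$ acts \emph{non-properly} on $M$. Indeed, if the action were proper, then by the equivalence recalled in the introduction (properness is equivalent to the existence of a complete invariant Riemannian metric) together with an averaging argument over the conformal factors, one would produce a metric $\gi'\in[\gi]$ with $\Conf(M,[\gi])\subseteq\Iso(M,\gi')$, contradicting essentiality. Non-properness then yields a sequence $\{\varphi_k\}\subset\Conf(M,[\gi])$ and points $x_k\to x$, $\varphi_k(x_k)\to y$, such that $\{\varphi_k\}$ has no subsequence converging in $\Conf(M,[\gi])$; this degeneration concentrates the conformal distortion at $y$, producing a north--south type dynamics.

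The heart of the proof is then to show that $(M,[\gi])$ is \emph{conformally flat}. The obstruction to conformal flatness is a pointwise conformal invariant: the Weyl tensor when $n\geq 4$, and the Cotton--York tensor when $n=3$. Pulling this tensor back along the degenerating sequence $\varphi_k$ and using its conformal covariance, the unbounded distortion at $y$ would force the invariant to blow up there unless it vanishes at $y$; by letting the basepoint $y$ range over an open set as the sequence is varied, one concludes that the invariant vanishes identically. I expect \textbf{this step to be the main obstacle}: it is precisely where global, purely dynamical information must be converted into the vanishing of a local curvature quantity, and the three-dimensional case (Cotton--York) has to be treated separately from the higher-dimensional case (Weyl).

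Finally, conformal flatness equips $(M,[\gi])$ with a flat conformal structure modeled on the round sphere $(\Sn^n,[\gi_{\Sn}])$, whose conformal group is $O(n+1,1)$. This furnishes a developing map $D:\tilde{M}\to\Sn^n$, equivariant with respect to the holonomy representation $\rho:\pi_1(M)\to O(n+1,1)=\Conf(\Sn^n,[\gi_{\Sn}])$. A Liouville/Kuiper--Kulkarni analysis shows that $D$ is a covering onto its image, while the non-proper dynamics from the second step supplies a holonomy (or conformal) element with an attracting fixed point; this forces $M$ to be simply connected and $D$ to be a diffeomorphism onto either all of $\Sn^n$ or onto $\Sn^n\setminus\{p\}$. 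Since $\Sn^n\setminus\{p\}$ is conformally equivalent to $\En^n$, the compact case yields $(\Sn^n,[\gi_{\Sn}])$ and the non-compact case yields $(\En^n,[\gi_{\En}])$, which completes the classification.
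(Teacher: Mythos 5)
The paper contains no proof of this statement: it is quoted as Ferrand's 1996 classification of essential Riemannian structures and used as a black box (the compact case is the Lichnerowicz conjecture, settled by Obata and Lelong--Ferrand; Schoen also gave an analytic proof). So your proposal can only be compared with the literature, not with an internal argument. What you sketch is not Ferrand's route but the later dynamical one, in the style of Frances' proof for rank-one parabolic geometries: essential $\Rightarrow$ non-proper, extract a degenerating sequence with attracting (north--south) dynamics, kill the local conformal invariants, then develop into the model $\Sn^n$. Ferrand instead works with conformally invariant capacities and quasiconformal estimates and constructs the conformal equivalence with $\Sn^n$ or $\En^n$ directly, without a separate conformal-flatness step. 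Your first step is sound: properness does imply inessentiality, since $[\gi]$ is a convex cone of metrics and a proper action admits invariant partitions of unity, so one can average conformal factors to get an invariant metric in the class.

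Two later steps genuinely fail as written. First, the case $n=2$ is silently dropped: you treat $n\geq 4$ (Weyl) and $n=3$ (Cotton--York), but in dimension $2$ every metric is locally conformally flat and, more fatally, Liouville rigidity fails --- local conformal maps are arbitrary (anti)holomorphic maps --- so there is no developing map $D:\tilde{M}\rightarrow \Sn^2$ and your third step is meaningless there. The two-dimensional statement needs uniformization instead: the universal cover is $\Sn^2$, $\C$, or the disk; the disk and its quotients are inessential because their conformal group is the hyperbolic isometry group, and the nontrivial quotients of $\C$ and of $\Sn^2$ (cylinders, tori, $\R\Pn^2$) are inessential as well, leaving exactly $\Sn^2$ and $\En^2$. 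Second, the passage from vanishing of the invariant ``at $y$'' to vanishing ``identically'' is unjustified: the contraction argument kills the Weyl/Cotton tensor only on the open basin of attraction of the degenerating sequence, and conformal flatness has no unique-continuation property from an open set (a smooth metric can be flat on an open set and curved elsewhere). In the actual proofs this is the crux: one must show by a separate dynamical or maximality argument that the flat basin is all of $M$, or run the developing-map analysis on the basin and rule out a nonempty boundary. Relatedly, producing genuine north--south dynamics from bare non-properness is itself nontrivial (Ferrand's convergence property, proved via capacities, or Cartan-connection holonomy estimates). You honestly flag the flatness step as the main obstacle, but as it stands the proposal assumes precisely the points where the theorem is hard.
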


The $n$-dimensional sphere $\Sn^n$ compactifies conformally the Euclidean space $\En^n$ by stereographic map. More precisely, for an arbitrary point $p\in \Sn^n$, the complement $\Sn^n\setminus \{p\}$ is conformally equivalent to the Euclidean space $\En^n$. Every conformal map on $\En^n$ extends to a unique conformal map on $\Sn^n$. Therefore, classifying the cohomogeneity one (resp. zero) conformal actions on the Euclidean space $\En^n$ is a result of the classification of the cohomogeneity one (resp. zero) conformal actions on the sphere $\Sn^n$.

%\begin{theorem}\label{thm.Li}(Liouville's Theorem)
%Let $U,V\subset \En^n$ ($n\geq 3$) be two connected nonempty open subsets, and $f:U\rightarrow V$ be a conformal map. Then $f$ extends to a global conformal map on $\En^n$. 
%\end{theorem}

%Let $(M,\gi)$ be a pseudo-Riemannian manifold, and $G,H$ Lie subgroups of $\Conf(M,\gi)$. The actions of $G$ and $H$ on $M$ are called \textbf{orbit equivalent} if there exists a conformal map $\varphi:M\rightarrow M$ such that for all $p\in M$ the orbit $H(\varphi(p))$ coincides with $\varphi\big(G(p)\big)$. Observe that, If $G$ and $H$ are conjugate within $\Conf(M,\gi)$, then their actions are orbit equivalent. It is easy to see that the reverse implication is not necessarily true. However, the following lemma gives a powerful tool to distinguish the orbit equivalent actions.
%\begin{lemma}
%Let $G,H\subset \Conf(M,\gi)$ be connected Lie subgroups, and $\varphi$ be a conformal map on $M$. Then the following statements are equivalent:
%\begin{itemize}
%\item $G$ and $H$ are orbit equivalent via $\varphi$;
%\item for all $p\in M$,
%\[d\varphi_p(T_pG(p))=T_{\varphi(p)}(H(\varphi(p))).\]
%\end{itemize}
%\end{lemma}

In this paper, we classify, up to conjugacy, the cohomogeneity one and cohomogeneity zero conformal actions on the $3$-dimensional Riemannian essential spaces, namely, the sphere $\Sn^3$, and the Euclidean space $\En^3$. Actually, subgroups of $\Conf(\Sn^3)$ fixing a point in $\Sn^3$ can be seen as the subgroups of $\Conf(\En^3)$ (see Section \ref{sec.euc}).

Let $\R^{1,4}$ denote the $5$-dimensional real vector space endowed with the Lorentzian quadratic form 
$$\mathfrak{q}(v)=-v_1^2+v_2^2+v_3^2+v_4^2+v_{5}^2.$$
The image of the nullcone
$$\Nu^{1,4}=\{v\in \R^{1,4}\setminus \{0\}:\mathfrak{q}(v)=0\}$$
under the projection $\Pn:\R^{1,4}\rightarrow\R\Pn^{4}$ is a submanifold diffeomorphic to the $3$-dimensional sphere $\Sn^3$. The degenerate metric on $\Nu^{1,4}$ induces a Riemannian conformal structure $[ds^2]$ on $\Sn^{3}\subset \R\Pn^{4}$. The structure $(\Sn^3,[ds^2])$ is conformally equivalent to the natural round Riemannian metric $\gi_{\Sn}$ on $\Sn^{3}$ of constant sectional curvature $1$.

The action of the Lorentz group $O(1,4)$ on $\R^{1,4}$ leaves the nullcone invariant. This induces a natural conformal action of $O(1,4)$ on the sphere $\Sn^3$. Actually, $O(1,4)$ is the full conformal group of $\Sn^3$. The identity component of $O(1,4)$, denoted by $SO_\circ(1,4)$, consists of the linear isometries of $\R^{1,4}$ preserving both orientation and time-orientation.

The following theorems classify connected Lie subgroups of $\Conf(\Sn^3)$ acting with cohomogeneity one and cohomogeneity zero on $\Sn^3$.
\begin{theorem}\label{thm.m1}
Let $G\subset \Conf(\Sn^3)$ be a connected Lie subgroup which acts on $\Sn^3$ with cohomogeneity one. Then, up to conjugacy, $G$ admits the same orbits as those induced by one of the groups in Table \ref{table1}.  Furthermore, the action of $G$ is proper if and only if it is conjugate to $SO(3)$ or $SO(2)\times SO(2)$.
\begin{table}[h!]
\centering
\begin{tabular}{|c| c| c|c| } 
 \hline
$\mathscr{P}$ & $SO(2)\times \el$  & $ \mathcal{N}_a\ltimes \el$ & $\R_+^*\ltimes \el$\\
\hline
 $\R_+^*\times SO(2)$& $SO(3)$  & $SO(2)\times SO(2)$ & $SO_\circ(1,2)$ \\
\hline
\end{tabular}
\caption{Here $a\in \R^*$ is a constant number, $\mathscr{P}=\R e_1\oplus \R e_2$, and $\el=\R e_3$.}
\label{table1}
\end{table}
\end{theorem}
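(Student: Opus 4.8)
The plan is to classify connected Lie subgroups of $\Conf(\Sn^3)\cong O(1,4)$ acting with cohomogeneity one through their Lie algebras inside $\so(1,4)$. Since the orbit structure depends only on the connected component, I work inside $SO_\circ(1,4)$, and a cohomogeneity one action has generic orbits of dimension $2$, so the relevant subalgebras $\g\subset \so(1,4)$ are those whose generic orbit in $\Sn^3$ is $2$-dimensional. I would first recall the structure of $\so(1,4)$ and its parabolic/Iwasawa decomposition. Fixing a null line (a point $p\in\Sn^3$) determines a parabolic subgroup $\mathscr{P}=\Iso(\En^3)\ltimes\text{(dilations)}$ whose Lie algebra has the form $\R_+^*\oplus(\so(3)\ltimes\mathscr{P})\oplus\el$ in the notation of Table \ref{table1}, reflecting the identification of point-stabilizers in $\Conf(\Sn^3)$ with the conformal group $\Conf(\En^3)$ of similarities-plus-inversions discussed in the excerpt. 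The abelian nilpotent piece $\el=\R e_3$ and the translation plane $\mathscr{P}=\R e_1\oplus\R e_2$ play the roles indicated in the caption.

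The core of the argument is an algebraic classification. I would enumerate, up to conjugacy in $SO_\circ(1,4)$, all connected Lie subgroups $G$ of dimension $\geq 2$ whose orbits in $\Sn^3$ are generically $2$-dimensional and never $3$-dimensional (otherwise the action would be transitive, i.e. cohomogeneity zero). The natural case split is according to the dynamical type of $G$ acting on $\Sn^3$: whether $G$ fixes a point (equivalently preserves a null line), fixes a pair of points, preserves a circle or a round $2$-sphere, or acts without such invariant objects. The conformal geometry of $\Sn^3$ gives a dictionary here: a group fixing a point reduces to a subgroup of $\Conf(\En^3)$, whose cohomogeneity one subgroups (similarity groups such as $\R_+^*\ltimes\el$, $\mathcal{N}_a\ltimes\el$, $\R_+^*\times SO(2)$, and $SO(2)\times\el$) are comparatively easy to list; a group with no fixed point but bounded dynamics is essentially elliptic and conjugate into the maximal compact $SO(4)$, producing $SO(3)$ (orbits the round $2$-spheres foliating $\Sn^3$ minus two points) and $SO(2)\times SO(2)$ (orbits the Clifford tori); and a group with hyperbolic dynamics preserving a circle gives $SO_\circ(1,2)$ acting on $\Sn^3$ with the two-dimensional de Sitter-type orbits. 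For each candidate I would verify the cohomogeneity is exactly one by computing the generic orbit dimension and exhibiting a one-parameter family of orbits, and I would identify the singular orbits (fixed points, or lower-dimensional orbits such as the fixed circle of $\R_+^*\times SO(2)$).

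For the properness claim, the plan is to invoke the characterization recalled in the introduction: the $G$-action is proper if and only if $\Sn^3$ carries a complete $G$-invariant Riemannian metric, equivalently $G$ is conjugate into the maximal compact subgroup $SO(4)\subset SO_\circ(1,4)$ (so that orbits are closed, isotropy compact, and the orbit space Hausdorff). Among the eight model groups, exactly the two \emph{compact} ones, $SO(3)$ and $SO(2)\times SO(2)$, lie in $SO(4)$; the remaining six contain genuinely conformal elements (parabolic or hyperbolic with respect to $\q$) whose orbits accumulate on a fixed point or a fixed circle, so those actions fail to be proper. I would make this precise by checking for each non-compact model that some orbit is non-closed or that the isotropy fails to be compact, which obstructs properness.

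The main obstacle I anticipate is the completeness of the algebraic enumeration: ensuring that every connected cohomogeneity one subgroup is conjugate to \emph{one of these eight} and that the list has no redundancies, i.e. no two distinct entries are conjugate in $SO_\circ(1,4)$ while still possibly sharing orbits up to conformal equivalence. Handling this requires a careful case analysis on the possible invariant null subspaces and on the induced action on the ideal boundary, together with a dimension count showing no other $2$- or higher-dimensional subalgebra of $\so(1,4)$ yields a new orbit foliation. I expect the delicate point to be the groups fixing a point, where the reduction to $\Conf(\En^3)$ must be matched against its own cohomogeneity one classification (Section \ref{sec.euc}), since several distinct subgroups of the parabolic $\mathscr{P}$ can produce conformally equivalent orbit families and must be collapsed to the four affine-type representatives in the top row of Table \ref{table1}.
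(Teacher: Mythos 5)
Your overall architecture matches the paper's: split into a point-stabilizer case handled inside $\Conf(\En^3)\cong(\R^*\times O(3))\ltimes\R^3$, an elliptic case handled inside the maximal compact $SO(4)$, and a remaining case leading to $SO_\circ(1,2)$. But there is a genuine gap at the foundation: you never establish that this trichotomy is \emph{exhaustive}. Your residual case, a group ``acting without such invariant objects,'' is precisely where the work lies, and your proposed remedy --- ``a careful case analysis on the possible invariant null subspaces'' --- presupposes that an invariant subspace exists at all. The paper closes this with the theorem of Di Scala--Olmos (Theorem \ref{thm.1}): a connected Lie subgroup of $O(1,n)$ acting irreducibly on $\R^{1,n}$ equals $SO_\circ(1,n)$. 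Since $SO_\circ(1,4)$ is transitive on $\Sn^3$, a cohomogeneity one $G$ must be a proper subgroup, hence reducible, and Lemma \ref{lem.sub} then sorts the invariant subspaces of dimension at most $2$: a lightlike line gives a fixed point in $\Sn^3$; a timelike line gives a fixed point in $\Hn^4$, i.e.\ conjugacy into $SO(4)$; a Lorentzian or degenerate $2$-plane reduces (via connectedness of $G$) back to invariant lightlike lines; and what remains is a positive definite subspace of dimension $1$ or $2$. Without this input your step ``bounded dynamics $\Rightarrow$ conjugate into $SO(4)$'' is likewise unsupported: in the paper it is a consequence of an invariant timelike line, not of a freestanding dynamical argument. (Your description of the point-stabilizer's Lie algebra as $\R_+^*\oplus(\so(3)\ltimes\mathscr{P})\oplus\el$ also conflates the translation plane $\mathscr{P}$ with the full translation part $\R^3=\mathscr{P}\oplus\el$; the stabilizer is $(\R^*\times O(3))\ltimes\R^3$.)

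Beyond exhaustiveness, the two computational pillars are asserted rather than executed. In the point-stabilizer case the paper proves Proposition \ref{prop.group}, a complete conjugacy classification (via the adjoint action) of all connected subgroups of $(\R^*\times O(3))\ltimes\R^3$ of dimension $\geq 2$, and only then collapses conjugacy classes with identical orbit foliations (e.g.\ $SO(2)\ltimes\mathscr{P}$ versus $\mathscr{P}$, or $\mathcal{N}_a\ltimes\mathscr{P}$ versus $\R_+^*\ltimes\mathscr{P}$) to obtain Theorem \ref{thm.euc.1}; your sketch names only some of the resulting groups and gives no mechanism for the enumeration. In the compact case, the paper's Proposition \ref{pro} requires Mostert's orbit-space theorem, the homotopy exact sequence of the fibration $\Sn^3\to\Sn^1$ to exclude orbit space $\Sn^1$, simple connectedness of $\Sn^3$ to exclude exceptional orbits, and a normality argument for the principal isotropy to force $G=SO(2)\times SO(2)$ when a singular orbit is a circle --- none of which your plan supplies. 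Finally, your properness criterion ``proper iff conjugate into $SO(4)$'' is stated too loosely: properness of an action on the compact space $\Sn^3$ forces $G$ itself to be compact, while conversely a subgroup merely conjugate into $SO(4)$ need not be closed (a dense one-parameter subgroup of a maximal torus is the standard counterexample); the paper handles this by deducing compactness from properness first, and closedness of cohomogeneity one subgroups of $SO(4)$ separately via Di Scala--Leistner--Neukirchner. Your fallback --- checking directly that each non-compact model has non-closed orbits --- would repair the ``only if'' direction, but as formulated the biconditional is not a valid general principle.
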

\begin{proof}
It follows from Lemma \ref{lem.sub}, Proposition \ref{pro}, and Theorems \ref{thm.euc.1}, \ref{thm.p.1}.
\end{proof}
\begin{theorem}\label{thm.m2}
Let $G\subset \Conf(\Sn^3)$ be a connected Lie subgroup which acts on $\Sn^3$ with cohomogeneity zero. Then, up to conjugacy, $G$ admits the same orbits as those induced by one of the groups in Table \ref{table2}. Furthermore, if $G$ acts on $\Sn^3$ properly, then the action is transitive.
\begin{table}[h!]
\centering
\begin{tabular}{|c| c| c|c| } 
 % \multicolumn{4}{|c|}{\textbf{Subgroups acting on $\En^3$ by cohomogeneity zero or one}} \\
  \hline
 $\R^{3}$ & $\R_+^*\ltimes \mathscr{P}$  & $(\R_+\times SO(2))\ltimes \el$ & $\R_+^*\times SO(3)$ \\
  \hline
$SO_\circ(1,3)$ & $SO_\circ(1,2)\times SO(2)$ & $SO_\circ(1,4)$ & \\
\hline
\end{tabular}
\caption{Here $\mathscr{P}=\R e_1\oplus \R e_2$, and $\el=\R e_3$.}
\label{table2}
\end{table}
\end{theorem}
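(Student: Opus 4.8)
The plan is to characterise cohomogeneity zero by the existence of an open orbit and then to organise the classification by the geometry of the singular orbits. Since the cohomogeneity is zero, there is an orbit of dimension $3$, which is open in $\Sn^3$; let $\mathcal{O}$ denote the union of all open orbits and put $\Sigma = \Sn^3 \setminus \mathcal{O}$. If $\Sigma = \emptyset$ the action is transitive, and since any two transitive actions have the single common orbit $\Sn^3$, such a $G$ is orbit equivalent to $SO_\circ(1,4)$ (which does act transitively on $\Sn^3$); this accounts for the last entry of Table \ref{table2}. I may therefore assume $\Sigma \neq \emptyset$: it is a closed, $G$-invariant set, and every orbit contained in it has dimension at most $2$.

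The next step is to choose in $\Sigma$ an orbit $S$ of least dimension; such an orbit is closed in $\Sn^3$, and $\dim S \in \{0,1,2\}$. A closed orbit of a conformal group is conformally homogeneous, forcing a $2$-dimensional one to be a round $\Sn^2$ and a $1$-dimensional one a conformal circle; I would make this identification first. If $\dim S = 0$ then $G$ fixes a point and, by the reduction recalled in Section \ref{sec.euc}, $G$ is conjugate into the stabiliser $\mathrm{Sim}_\circ(\En^3) = (\R_+^* \times SO(3)) \ltimes \R^3$; this case produces the four entries $\R^3$, $\R_+^* \ltimes \mathscr{P}$, $(\R_+ \times SO(2)) \ltimes \el$ and $\R_+^* \times SO(3)$. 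If $\dim S = 1$ then $G$ preserves a conformal circle, whose stabiliser in $SO_\circ(1,4)$ has identity component $SO_\circ(1,2) \times SO(2)$ (coming from an orthogonal splitting $\R^{1,4} = \R^{1,2} \oplus \R^2$), and one checks this is the only orbit-equivalence class here. If $\dim S = 2$ then $G$ preserves a round $\Sn^2$, hence lies in the stabiliser of a splitting $\R^{1,4} = \R^{1,3} \oplus \R$, whose identity component is $SO_\circ(1,3)$; its open orbits are the two copies of $\Hn^3$ bounded by $S$, giving the entry $SO_\circ(1,3)$.

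For the fixed-point subcase, which carries the bulk of the computation, I would exploit the $|1|$-grading $\so(1,4) = \g_{-1} \oplus \g_0 \oplus \g_{+1}$ with $\g_0 = \R \oplus \so(3)$ and $\g_{+1} \cong \R^3$ the translations, so that $\mathrm{Sim}_\circ(\En^3) = \exp(\g_0 \oplus \g_{+1})$. Writing a candidate subgroup through its translational part $V = G \cap \R^3$ and the image $L$ of $G$ in $\R_+^* \times SO(3)$, the requirement of an open orbit in $\En^3$ forces, up to conjugacy, exactly the four configurations above; for each I would exhibit the open orbit explicitly and list the remaining strata (for instance $\R_+^* \ltimes \mathscr{P}$ gives two open half-spaces, an invariant plane, and the fixed point at infinity), thereby confirming both cohomogeneity zero and the orbit decomposition.

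The ``furthermore'' clause is then immediate: a proper action has closed orbits, as recalled in Section \ref{sec.itro}, so any open orbit is simultaneously open and closed in the connected manifold $\Sn^3$ and hence equals $\Sn^3$; thus a proper cohomogeneity zero action is transitive. I expect the genuine obstacle to lie not in this last point but in the completeness and non-redundancy of the list \emph{up to orbit equivalence}: many conjugacy classes of subalgebras of $\so(1,4)$ (for example the Iwasawa factor $AN$ of $SO_\circ(1,3)$, which acts transitively on each $\Hn^3$ but fixes a point of $S$) also possess open orbits, and one must verify that each collapses onto one of the seven decompositions of Table \ref{table2} and that these seven are pairwise inequivalent. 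Controlling this requires, on the one hand, the classification of closed conformal orbits as round spheres and circles used above, and on the other the adjoint bookkeeping governing how $V$ sits inside $\g_{+1}$ under $L$; this is where Lemma \ref{lem.sub} and Theorem \ref{thm.euc.1} do the decisive work.
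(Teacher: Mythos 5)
Your organizing idea --- stratify by the complement $\Sigma$ of the open orbits, pick an orbit $S\subset\Sigma$ of least dimension, assert that $S$ is closed, and then assert that a closed conformally homogeneous orbit of dimension $2$ (resp.\ $1$) must be a round $\Sn^2$ (resp.\ a conformal circle) --- rests on two steps that are stated without proof and are in fact false for general smooth actions. First, without properness, orbits of minimal dimension need not be closed: the orbits of a dense one-parameter subgroup of $SO(2)\times SO(2)\subset SO(4)$ in $\Sn^3$ are all $1$-dimensional, none is closed, and each is dense in a $2$-torus. Your $G$ is an arbitrary connected (possibly non-closed) Lie subgroup of $SO_\circ(1,4)$, so you have neither properness nor algebraicity to fall back on. Second, ``closed and conformally homogeneous'' does not force roundness: the Clifford tori are closed $2$-dimensional homogeneous orbits of $SO(2)\times SO(2)$ in $\Sn^3$, and closed $1$-dimensional orbits can be torus knots. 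To rule these out under the cohomogeneity-zero hypothesis you would have to compute the conformal stabilizers of such submanifolds and show they are too small to admit an open orbit; no such argument appears in the proposal, and it is precisely the missing content, since the whole reduction to the stabilizers $\mathrm{Sim}_\circ(\En^3)$, $SO_\circ(1,2)\times SO(2)$ and $SO_\circ(1,3)$ hinges on it.

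The paper goes in the opposite, purely algebraic direction and avoids orbit-closure dynamics entirely: by Theorem \ref{thm.1} (Di Scala--Olmos) every proper connected subgroup of $SO_\circ(1,4)$ acts reducibly on $\R^{1,4}$, and Lemma \ref{lem.sub} converts an invariant subspace into the trichotomy ``fixed point in $\Sn^3$ / fixed point in $\Hn^4$ / invariant positive-definite subspace of dimension $1$ or $2$''; Theorem \ref{thm.euc.0} (whose engine is the full subalgebra classification of Proposition \ref{prop.group} --- essentially your $(V,L)$ bookkeeping, carried out completely), Lemma \ref{lem.prop} and Theorem \ref{thm.p.0} then finish the proof. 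Note that you do invoke Lemma \ref{lem.sub} at the end as ``doing the decisive work'', but that lemma only applies once reducibility is known, which is exactly what your minimal-orbit analysis was attempting, and failing, to establish geometrically; so the correct skeleton is already available to you and should replace the stratification argument, not supplement it. Two smaller points: your ``furthermore'' paragraph (proper $\Rightarrow$ orbits closed $\Rightarrow$ the open orbit is clopen $\Rightarrow$ transitive) is correct and matches Lemma \ref{lem.prop}; but in the fixed-point case the open-orbit condition forces the four Euclidean entries only up to \emph{orbit equivalence}, not up to conjugacy (e.g.\ $\R_+^*\ltimes\R^3$ is transitive on $\En^3$ yet conjugate to none of the four), and the relevant citation there is Theorem \ref{thm.euc.0}, not Theorem \ref{thm.euc.1}.
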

\begin{proof}
It follows from Lemmas \ref{lem.sub}, \ref{lem.prop}, and Theorems \ref{thm.euc.0}, \ref{thm.p.0}.
\end{proof}

The following theorem plays a key role in this study.
\begin{theorem}\cite{Di1}\label{thm.1}
Let $G$ be a connected Lie subgroup of $O(1,n)$ which acts on $\R^{1,n}$ irreducibly. Then $G=SO_\circ(1,n)$.
\end{theorem}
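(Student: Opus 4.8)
The plan is to work at the level of Lie algebras. Writing $\g=\mathrm{Lie}(G)\subseteq\so(1,n)$, it suffices to prove $\g=\so(1,n)$: since $G$ is connected and $SO_\circ(1,n)$ is the connected subgroup of $O(1,n)$ with algebra $\so(1,n)$, equality of algebras forces $G=SO_\circ(1,n)$. The defining representation of $\g$ on $V=\R^{1,n}$ is the inclusion, hence faithful, and irreducible by hypothesis. I would argue in two stages: first that $\g$ is semisimple, then that semisimplicity together with irreducibility forces $\g$ to be everything.

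For semisimplicity, consider the Levi decomposition $\g=\mathfrak s\ltimes\mathfrak r$ with radical $\mathfrak r$. Since $[\g,\mathfrak r]$ lies in the nilradical, and the nilradical acts trivially on any irreducible representation, faithfulness gives that the nilradical vanishes; hence $[\g,\mathfrak r]=0$ and $\mathfrak r\subseteq\mathfrak z(\g)$ is central. Now take any $z\in\mathfrak r$ and set $A=\rho(z)\in\so(1,n)$, so $A$ is skew-adjoint for $\q$ and commutes with all of $\g$. Then $A^{2}$ is self-adjoint and still commutes with $\g$, so each of its eigenspaces is $\g$-invariant; by irreducibility $A^{2}=\lambda\,\mathrm{Id}$ for a single $\lambda\in\R$. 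If $\lambda>0$ the eigenspaces of $A$ are proper invariant subspaces unless $A$ is a skew scalar, i.e. $A=0$; if $\lambda=0$ then $A$ is skew and nilpotent, so $\mathrm{im}\,A$ is a proper invariant subspace unless $A=0$; and if $\lambda<0$ then $J=A/\sqrt{-\lambda}$ is a skew complex structure, which turns $\q$ into the real part of a Hermitian form and forces the signature of $\q$ to be even in both entries, contradicting $(1,n)$. Hence $A=0$, so $\mathfrak r=0$ and $\g$ is semisimple. This signature-parity step, where the odd number $1$ of timelike directions is essential, is the crux of this stage.

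For the second stage, let $K\subseteq G$ be a maximal compact subgroup. Acting by isometries on the Hadamard manifold $\Hn^{n}$, $K$ has a fixed point $p$ by the Cartan fixed point theorem; $p$ corresponds to a timelike line $\R p\subseteq V$ and determines a Cartan involution $\theta$ of $\so(1,n)$ with $(+1)$-eigenspace $\mathfrak k_{0}=\so(n)$ (the isotropy) and $(-1)$-eigenspace $\mathfrak p_{0}\cong T_{p}\Hn^{n}=p^{\perp}$. By Mostow's theorem I may assume $\theta$ is compatible with $\g$, so that $\g=\mathfrak k\oplus\mathfrak m$ with $\mathfrak k=\g\cap\mathfrak k_{0}$ and $\mathfrak m=\g\cap\mathfrak p_{0}$. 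Irreducibility rules out $\mathfrak m=0$: otherwise $G=K$ would fix $p$ and $\R p$ would be invariant.

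Finally I would show the subspace $U:=\R p\oplus\mathfrak m\!\cdot\!p\subseteq V$ is $\g$-invariant. Indeed $\mathfrak k$ fixes $p$ and preserves $\mathfrak m\!\cdot\!p$ because $[\mathfrak k,\mathfrak m]\subseteq\mathfrak m$; and for $X,Y\in\mathfrak m$ one has $X(Yp)=[X,Y]p+Y(Xp)$, where $[X,Y]\in[\mathfrak m,\mathfrak m]\subseteq\mathfrak k$ kills $p$, while $Y(Xp)\in\R p$ since the boost $Y\in\mathfrak p_{0}$ sends $p^{\perp}$ into $\R p$. Thus $U$ is $\g$-invariant, so $U=V$ by irreducibility, giving $\mathfrak m\!\cdot\!p=p^{\perp}$ and hence $\mathfrak m=\mathfrak p_{0}$. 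Then $\mathfrak k\supseteq[\mathfrak m,\mathfrak m]=[\mathfrak p_{0},\mathfrak p_{0}]=\mathfrak k_{0}$ (the last equality because $(\so(1,n),\so(n))$ is an irreducible symmetric pair with $\so(1,n)$ simple), forcing $\mathfrak k=\mathfrak k_{0}$ and $\g=\mathfrak k_{0}\oplus\mathfrak p_{0}=\so(1,n)$. The main obstacle I anticipate is arranging the Cartan involution to be $\g$-compatible and verifying $[\mathfrak p_{0},\mathfrak p_{0}]=\mathfrak k_{0}$ cleanly; the conceptual content, however, is the observation that $U$ is the linear span of the totally geodesic orbit $G\cdot p$, which irreducibility forces to fill $\Hn^{n}$.
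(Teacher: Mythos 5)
Your proposal is essentially correct, and it is worth noting that the paper itself contains no proof of Theorem \ref{thm.1}: the result is quoted from Di Scala--Olmos \cite{Di1}, whose published argument is differential-geometric, analyzing orbits of the subgroup in the hyperbolic space $\Hn^n$ (fixed points in $\Hn^n$ or at infinity versus invariant totally geodesic subspaces versus transitivity, together with the classification of groups transitive on $\Hn^n$). Your route is genuinely different and purely algebraic: semisimplicity of $\g$ via the Levi decomposition and a commutant analysis, Mostow's theorem to put $\g$ in $\theta$-stable position $\g=\mathfrak k\oplus\mathfrak m$ relative to $\so(1,n)=\so(n)\oplus\mathfrak p_0$, and the bracket identity $[\mathfrak p_0,\mathfrak p_0]=\so(n)$ to force $\g=\so(1,n)$. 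The key invariance computation for $U=\R p\oplus\mathfrak m\cdot p$ is correct, and this buys a self-contained Lie-theoretic proof (modulo Levi and Mostow), whereas the cited geometric proof buys extra information about homogeneous submanifolds of $\Hn^n$. One stylistic remark: the maximal-compact/Cartan-fixed-point paragraph is redundant, since Mostow's theorem alone gives a $\g$-stable Cartan involution, and every Cartan involution of $\so(1,n)$ is $\theta_p$ for a unique $p\in\Hn^n$.

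Two steps in Stage 1 need repair. First, ``the nilradical acts trivially on any irreducible representation'' is false as stated: a one-dimensional abelian algebra equals its own nilradical and acts irreducibly and nontrivially on $\R$ by a nonzero scalar, or on $\R^2$ by rotations. The correct tool is Bourbaki's \emph{nilpotent radical}: $[\g,\mathfrak r]$ annihilates every finite-dimensional irreducible module (prove it over $\C$ via Lie's theorem and pass to $V_{\C}$, which is irreducible or a sum of two conjugate irreducibles). This yields exactly $[\g,\mathfrak r]=0$ by faithfulness, i.e.\ $\mathfrak r$ central --- and note that your own subsequent analysis of the central element $A$ exists precisely because the radical may a priori act by nonzero skew operators; if your lemma were literally true you would get $\mathfrak r=0$ at once and that analysis would be superfluous. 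Second, the inference ``by irreducibility $A^2=\lambda\,\mathrm{Id}$'' tacitly assumes $A^2$ has a real eigenvalue; self-adjointness for an indefinite form does not guarantee this (the rotation generator on $\R^{1,1}$ is $\q$-self-adjoint with spectrum $\pm i$). You must exclude a non-real conjugate pair $\mu=a+ib$, $b\neq 0$, in the spectrum of $A^2$. Either observe that elements of $\so(1,n)$ have spectrum in $\R\cup i\R$, since an eigenvalue $a+ib$ of $A$ with $ab\neq 0$ produces an invariant nondegenerate subspace with at least two timelike directions, impossible in signature $(1,n)$; or argue directly: irreducibility forces $V$ to be a single primary component of $A^2$, so $q(A^2)=0$ with $q(t)=t^2-2at+|\mu|^2$, whence $J=(A^2-a)/b$ is a $\q$-self-adjoint complex structure, and then $\q(Jv,Jw)=-\q(v,w)$ makes $J$ an anti-isometry, forcing signature $(m,m)$ --- again impossible. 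With these two repairs (and the standing assumption $n\geq 2$, needed for $[\mathfrak p_0,\mathfrak p_0]=\so(n)$; for $n=1$ the statement is vacuous, as no connected subgroup acts irreducibly on $\R^{1,1}$), your proof is complete.
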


Thanks to the above theorem, every connected proper Lie subgroup of $SO_\circ(1,4)$ preserves a non-trivial linear subspace in $\R^{1,4}$. The following lemma divides the study of conformal actions on the three dimensional essential Riemannian manifolds into the following three sections.

\begin{lemma}\label{lem.sub}
Let $G\subset \Conf_\circ(\Sn^3)=SO_\circ(1,4)=\Iso_\circ(\Hn^4)$ be a non-trivial proper connected Lie subgroup. Then $G$ fixes a point in $\Sn^3$ or in $\Hn^4$ or it preserves a positive definite linear subspace  of $\R^{1,4}$ of dimension $1$ or $2$.
\end{lemma}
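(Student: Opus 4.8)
The plan is to invoke Theorem \ref{thm.1} to produce a proper non-zero $G$-invariant subspace of $\R^{1,4}$, and then to run through the possible signatures of the restricted quadratic form $\mathfrak{q}$, checking that each type forces one of the three stated conclusions. Since $G$ is a \emph{proper} connected subgroup of $SO_\circ(1,4)\subset O(1,4)$, Theorem \ref{thm.1} shows that $G$ does not act irreducibly on $\R^{1,4}$, so there is a subspace $W$ with $0\subsetneq W\subsetneq \R^{1,4}$ that is $G$-invariant. Because $G$ preserves $\mathfrak{q}$, it also preserves the orthogonal complement $W^\perp$ and the radical $R:=W\cap W^\perp$. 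I would first dispose of the degenerate case: if $R\neq 0$, then $R$ is totally isotropic and $G$-invariant, and since the Witt index of $\R^{1,4}$ equals $1$, $R$ is a null line $\R u$. Projecting, $[u]$ is a point of $\Sn^3\subset \R\Pn^4$ fixed by $G$, which is the first of the three desired conclusions.

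Next I would treat the non-degenerate case $R=0$, where $\R^{1,4}=W\oplus W^\perp$ is an orthogonal direct sum of two non-degenerate $G$-invariant subspaces. Since the negative index of $\mathfrak{q}$ is $1$, indices add to $1$, so exactly one summand is positive definite; call it $P$ (after swapping $W$ and $W^\perp$ if needed). If $\dim P\in\{1,2\}$ the third conclusion holds immediately. If $\dim P=4$, then $P^\perp$ is a negative-definite (timelike) line $\R w$; for $g\in G$ one has $gw=\lambda(g)w$ with $\lambda(g)^2=1$ since $g$ preserves $\mathfrak{q}$, whence $\lambda\equiv 1$ by connectedness, so $G$ fixes $w$, and normalizing into the future cone (allowed since $G\subset SO_\circ(1,4)$ preserves time-orientation) yields a $G$-fixed point of $\Hn^4$.

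The remaining and most delicate case is $\dim P=3$, so that $L:=P^\perp$ is a $G$-invariant Lorentzian plane. Here I would examine the restriction homomorphism $\rho\colon G\to O(L)=O(1,1)$; as $G$ is connected, $\rho(G)$ lies in the identity component $SO_\circ(1,1)$, the one-parameter group of boosts. Every boost preserves each of the two null lines of $L$, so $G$ preserves these null lines, and each of them again furnishes a $G$-fixed point of $\Sn^3$ exactly as in the degenerate case. This closes the final case.

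Most of the bookkeeping is routine linear algebra; the only point needing genuine care is the Lorentzian-plane case, where one must exploit that a connected subgroup of $O(1,1)$ is forced into the boost subgroup, which is what pins down the invariant null directions. One should also verify the two dictionary entries used repeatedly, namely that a $G$-invariant null line corresponds to a $G$-fixed point of $\Sn^3$ and a $G$-fixed timelike line to a $G$-fixed point of $\Hn^4$, along with the Witt-index bound ruling out isotropic invariant subspaces of dimension $\geq 2$.
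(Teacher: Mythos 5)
Your proof is correct and follows essentially the same route as the paper: Theorem \ref{thm.1} supplies a proper nonzero $G$-invariant subspace, and a case analysis on the signature of that subspace and its orthogonal complement, with connectedness used to pin down the two null lines of an invariant Lorentzian plane (and the timelike direction), yields the three alternatives. Your only deviation is organizational: you extract the radical $W\cap W^\perp$ to dispose of all degenerate cases at once via the Witt-index bound, where the paper instead reduces to $\dim V\leq 2$ and treats the lightlike line and the degenerate $2$-plane as separate cases --- a slightly cleaner bookkeeping of the same argument.
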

\begin{proof}
By Theorem \ref{thm.1}, $G$ preserves a non-trivial linear subspace $V\leq \R^{1,4}$, since it is a proper subgroup. The group $G$ also preserves the orthogonal space $V^\perp$. Since $\dim V^\perp=5-\dim V$, we can restrict ourselves to $\dim V\leq 2$. If $V$ is a lightlike (resp. timelike) line, then $G$ fixes a point in $\Sn^3$ (resp. $\Hn^4$). Suppose that $V$ is a Lorentzian $2$-plane. Then $V$ contains exactly two distinct lightlike lines. Since $G$ is connected and its action on $V$ is isometric, it preserves both the lines. If $V$ is a degenerate $2$-plane, then it contains a unique lightlike line. This line is $G$-invariant, since the action of $G$ is isometric.
\end{proof}

We need the following well-known lemma in squeal.
\begin{lemma}\label{lem.so}
The orthogonal group $O(3)$ has three distinct connected Lie subgroups, up to conjugacy, namely, $\{Id\}$, $SO(2)$, and $SO(3)$.
\end{lemma}
\section{Proper actions}\label{sec.proper}
In this section we consider the proper actions on $\Sn^3$. Let $G$ be a connected Lie subgroup of $\Conf(\Sn^3)$ which acts on $\Sn^3$ properly. Then $G$ must be a compact Lie group since $\Sn^3$ itself is compact. Therefore, $G$ is, up to conjugacy, a subgroup of $SO(4)$, the maximal compact Lie subgroup of $SO_\circ(1,4)$. Actually, $SO(4)$ is the identity component of the stabilizer of a timelike line $\ell\leq \R^{1,4}$. Hence, it acts on $\ell$ trivially, and so, fixes a point in the hyperbolic space $\Hn^4$. The group $SO(4)$ preserves also the orthogonal complement $\ell^\perp\leq \R^{1,4}$ which is isometric to the Euclidean space $\R^4$. Hence, we may consider the action of $G$ on $\Sn^3$ as an action induced from its linear isometric action on $\R^4$, where $\Sn^3$ is the set of unit vectors in $\R^4$. It follows that $G$ acts on $(\Sn^3,\gi_{\Sn})$ isometrically.
\begin{lemma}\label{lem.prop}
Let $G\subset \Conf(\Sn^3)$ be a connected Lie subgroup which acts on $\Sn^3$ properly and with cohomogeneity zero. Then the action is transitive.
\end{lemma}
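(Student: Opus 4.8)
The plan is to combine the properness hypothesis with the cohomogeneity zero assumption to manufacture an orbit that is simultaneously open and closed in $\Sn^3$, and then to invoke the connectedness of $\Sn^3$.

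First I would record the structural facts already at our disposal. Since $G$ acts properly on the compact space $\Sn^3$, the discussion preceding the lemma shows that $G$ is, up to conjugacy, a compact connected subgroup of $SO(4)$ acting isometrically on the round sphere $(\Sn^3,\gi_{\Sn})$. For our purposes the essential consequence is the one recalled in the introduction: the orbits of a proper action are embedded closed submanifolds of $\Sn^3$. This embeddedness (as opposed to mere immersion) is exactly what the subsequent openness step will need.

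Next I would exploit the dynamical hypothesis. Cohomogeneity zero means, by definition, that the maximal dimension occurring among the $G$-orbits equals $\dim\Sn^3=3$. Hence there is at least one orbit $\mathcal{O}=G\cdot p$ with $\dim\mathcal{O}=3$. Being an embedded submanifold of the $3$-manifold $\Sn^3$ of full dimension $3$, the orbit $\mathcal{O}$ is an open subset of $\Sn^3$; concretely, a submanifold chart adapted to $\mathcal{O}$ must use all three coordinates, so $\mathcal{O}$ contains a neighbourhood of each of its points (this is just invariance of domain). Combined with the previous paragraph, $\mathcal{O}$ is both open and closed in $\Sn^3$, and it is nonempty.

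Finally I would conclude: a nonempty clopen subset of the connected manifold $\Sn^3$ must be all of $\Sn^3$, so $\mathcal{O}=\Sn^3$ and the action is transitive. The argument is essentially formal once the two standard inputs are in place, so I do not expect a genuine obstacle; the only point that demands care is the openness of a top-dimensional orbit, which rests on the orbit being an \emph{embedded} submanifold, a property guaranteed here precisely because the action is proper. If one wished to avoid the properness hypothesis at that step, one could instead note that for a compact group the principal (hence top-dimensional) orbits form an open dense set, but since properness is assumed the clopen argument is the cleanest route.
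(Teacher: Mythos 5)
Your proposal is correct and follows essentially the same route as the paper's proof: properness (hence compactness of $G$, since $\Sn^3$ is compact) gives closed orbits, cohomogeneity zero gives a top-dimensional, hence open, orbit, and connectedness of $\Sn^3$ forces this clopen orbit to be everything. Your extra care in justifying the openness step via embeddedness and invariance of domain is a fair elaboration of what the paper leaves implicit, but it is not a different argument.
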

\begin{proof}
Observe that, $G$ is compact, since the action is proper and $\Sn^3$ is compact. Hence, every $G$-orbit in $\Sn^3$ is closed. The group $G$ admits an open orbit, say $\mathcal{O}\subset \Sn^3$. Therefore, $\mathcal{O}$ coincides with $\Sn^3$, since it is a non-empty open and closed subset of a connected space. Hence, $G$ acts on $\Sn^3$ transitively.
\end{proof}
\begin{proposition}\label{pro}
Let $G\subset \Conf(\Sn^3)$ be a connected Lie subgroup which acts on $\Sn^3$ properly and with cohomogeneity one. Then it is conjugate either to $SO(3)$ or $SO(2)\times SO(2)$.
\end{proposition}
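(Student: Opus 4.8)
The plan is to build on the reduction already recorded before the statement: since the action is proper and $\Sn^3$ is compact, $G$ is compact and, up to conjugacy, a connected subgroup of $SO(4)$ acting by isometries of the round sphere $(\Sn^3,\gi_{\Sn})$, where $\Sn^3$ is the unit sphere of the standard orthogonal representation of $SO(4)$ on $\R^4$. Because the orbit of $v\in\R^4\setminus\{0\}$ under a linear action coincides with its orbit on the sphere of radius $|v|$, the orbit space on $\R^4$ is the product of the orbit space on $\Sn^3$ with the radial half-line; hence cohomogeneity one on $\Sn^3$ is equivalent to cohomogeneity two for the linear action on $\R^4$, and a principal orbit is a $2$-dimensional compact homogeneous surface. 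I therefore aim to list, up to conjugacy, all connected $G\subseteq SO(4)$ whose generic orbit on $\Sn^3$ is exactly $2$-dimensional, excluding both the transitive case (cohomogeneity zero) and the case of $1$-dimensional maximal orbits (cohomogeneity two).

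The key device is to decompose the standard representation $\R^4$ into $G$-irreducible orthogonal summands. As $G$ is connected, no summand of dimension $1$ carries a non-trivial action, so the possible lists of summand dimensions are $\{4\}$, $\{3,1\}$, $\{2,2\}$, $\{2,1,1\}$, and $\{1,1,1,1\}$. I would dispose of the last three quickly: in each of them $G$ embeds into $SO(2)\times SO(2)$ (respectively into $SO(2)$, or is trivial), and in the $\{2,1,1\}$ and $\{1,1,1,1\}$ lists the maximal orbit has dimension at most one. In the $\{2,2\}$ case a proper subtorus again has $1$-dimensional orbits, so cohomogeneity one forces $G$ to be the full maximal torus $SO(2)\times SO(2)$, whose orbits on $\Sn^3$ are the Clifford tori together with two singular circles. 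For the case $\{3,1\}$ I would invoke Lemma \ref{lem.so}: here $G$ is a connected subgroup of the $SO(3)$ acting on the $3$-dimensional summand, and among $\{Id\}$, $SO(2)$, $SO(3)$ only $SO(3)$ acts irreducibly there; its orbits on $\Sn^3$ are the latitude spheres $\Sn^2$ together with two fixed points, giving cohomogeneity one.

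The main obstacle is the irreducible case $\{4\}$, which I claim always yields a transitive (hence excluded) action. I would argue this by splitting according to the type---real, complex, or quaternionic---of the commutant of the real-irreducible representation. In the complex type $G$ embeds in $U(2)$ acting $\C$-irreducibly on $\C^2$, which forces $G$ to contain a copy of $SU(2)$ and so act transitively on $\Sn^3$; in the quaternionic type irreducibility forces $G=Sp(1)=SU(2)$, again transitive; and in the absolutely irreducible real type the representation is equivalent to the standard one of $SO(4)$, which is transitive. Equivalently, one may run the whole enumeration through the isomorphism $\so(4)\cong\mathfrak{su}(2)\oplus\mathfrak{su}(2)$ together with Goursat's lemma, listing the connected subgroups by dimension and checking that dimensions $4,5,6$ give only transitive actions (or do not occur) while dimension $1$ gives circle orbits. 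Either route confirms that the only connected subgroups of $SO(4)$ acting with cohomogeneity one are, up to conjugacy, $SO(3)$ and $SO(2)\times SO(2)$, which are the two asserted groups.
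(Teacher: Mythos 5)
Your proposal is correct, but it takes a genuinely different route from the paper's proof. The paper argues topologically and dynamically: it invokes Mostert's theorem \cite{Mos} to restrict the orbit space to $[0,1]$, $[0,1)$, $\Sn^1$, or $\R$, eliminates the non-compact cases by compactness of $\Sn^3$ and the case $\Sn^1$ via the long exact homotopy sequence of the orbit fibration \cite{Hat}, rules out exceptional orbits by simple connectivity \cite{Br}, and then analyzes the two singular orbits: a fixed point forces $G\subseteq SO(3)$ and Lemma \ref{lem.so} gives $G=SO(3)$, while a circular singular orbit leads, through a careful analysis of the (normal) singular isotropy group $K$ and its fixed $2$-plane $\Pi$, to a surjection $G\to SO(2)\times SO(2)$ with trivial kernel. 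You instead stay entirely inside representation theory: complete reducibility of $G$ on $\R^4$, discarding the patterns $\{2,1,1\}$ and $\{1,1,1,1\}$ (orbits of dimension at most one), identifying $\{2,2\}$ with the maximal torus and $\{3,1\}$ with $SO(3)$ via Lemma \ref{lem.so}, and killing the irreducible case by the Schur commutant trichotomy. Your route avoids Mostert's theorem and all algebraic topology, and as a by-product re-proves that irreducible connected subgroups of $SO(4)$ act transitively --- a fact the paper's subsequent corollary obtains differently, by quoting closedness from \cite{Di2}; conversely, the paper's route delivers for free the finer orbit picture (orbit space $[0,1]$, exactly two singular orbits, the torus foliation between two circles) that is exploited in the remark following Proposition \ref{pro}. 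One spot in your sketch needs tightening: in the real (absolutely irreducible) case, the sentence ``the representation is equivalent to the standard one of $SO(4)$'' is not by itself a proof that $G=SO(4)$; one should argue, for instance, that absolute irreducibility forces $G$ to be semisimple, that the proper semisimple subalgebras of $\so(4)\cong\mathfrak{su}(2)\oplus\mathfrak{su}(2)$ are copies of $\mathfrak{su}(2)$, and that these act on $\R^4$ either reducibly (as $1+3$) or with quaternionic commutant (left quaternion multiplication), never absolutely irreducibly. Your alternative enumeration via Goursat's lemma supplies exactly this, so the issue is one of exposition rather than substance, and the overall argument stands.
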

\begin{proof}
Since $G$ acts properly and $\Sn^3$ is compact, $G$ is a compact subgroup of $SO(4)$ up to conjugacy. By a well known fact in cohomogeneity one literature, the orbit space may be homeomorphic to one of the following spaces (see \cite{Mos}).
$$[0,1]\quad ,\quad [0,1)\quad ,\quad \mathbb{S}^1\quad ,\quad \mathbb{R}.$$
By the compactness of $\Sn^3$ the cases $[0,1)$ and $\R$ are excluded. We claim that the case $\Sn^1$ is discarded too. If the orbit space $\Sn^3/G$ is homeomorphic to $\Sn^1$ then the projection $\pi:\Sn^3\rightarrow \Sn^1$ is a fibration with fiber $G/K$, where $K$ is the isotropy subgroup of a regular point (regular point is a point that its image in the orbit space is an internal point). By Theorem 4.41 of \cite[p.379]{Hat} there is a long exact sequence of homotopy groups as follows.
$$  \rightarrow \pi_m(G/K) \rightarrow \pi_m(\Sn^{3}) \rightarrow \pi_m(\Sn^1
) \rightarrow \pi_{m-1}(G/K) \rightarrow \cdots \rightarrow \pi_0(\Sn^3
) \rightarrow 0, $$
which implies that $\pi_1(\Sn^1)=0$, a contradiction. Thus the orbit space is homeomorphic to $[0,1]$, and so there exist two singular orbits. Non of the singular orbits is exceptional, since $\Sn^3$ is simply connected (see \cite[p.185]{Br}). Thus each singular orbit is either a point or a circle. 

Let $G(x)$ be a singular orbit. 

%First we show that the action of $G$ on $\R^4$ is reducible. Let $x\in \Sn^3$ be a point such that the orbit $G(x)$ is singular. 
\textbf{Case I:} If $G(x)$ is a point (the orbit is zero-dimensional) then clearly $G$ acts on $\R^4$ reducibly. Precisely, $G$ preserves the line $\R x\leq \R^4$. Hence, up to conjugacy, it is a subgroup of $SO(3)$. In the one hand, $SO(3)$ fixes two antipodal points on $\Sn^3$ and admits a codimension one foliation on the complement of these two points in which every leaf is homothetic to the $2$-sphere $\Sn^2$. On the other hand, by Lemma \ref{lem.so}, $SO(3)$ has no $2$-dimensional Lie subgroup. This implies that $G=SO(3)$, up to conjugacy.

\textbf{Case II:} Now, assume that $G(x)$ is a $1$-dimensional orbit. The action of $G$ on $G(x)$ induces the representation $\pi:G\rightarrow \Iso(G(x))$. Let $K$ denote the stabilizer $G_x$. Obviously, $\dim K\geq 1$, since $\dim G\geq 2$. The subgroup $K$ is closed, and so, compact. Hence, for an arbitrary point $y\in G(x)$ the orbit $K(y)$ is compact. It follows that $K$ acts on $G(x)$ trivially. We deduce that for any $y\in G(x)$ the stabilizer $G_y$ coincides with $K$. Consequently, $K$ is the kernel $\ker \pi$. So, $K$ is a normal subgroup of $G$. Since, $K$ fixes at least two non-antipodal points in $\Sn^3$, it preserves a $2$-plane $\Pi\leq \R^4$. In fact, $K$ acts on $\Pi$ trivially, since its action is isometric. Also, $K$ preserves the orthogonal complement $\Pi^\perp$ which is of dimension $2$. Note that, since the action of $K$ on $\R^4$ is faithful, it admits an orbit of dimension $\geq 1$. Therefore, it acts on $\Pi^\perp$ irreducibly. So, $\Pi$ is the maximal subset of $\R^4$ which $K$ acts on it trivially. This induces a faithful representation of $K$ in the group of linear isometries of $\Pi^{\perp}$ (the kernel of this representation will be in the kernel of the action of $G$ on $\Sn^3$). Hence $K\subseteq O(2)$ and so $\dim G=2$. 
For any $g\in G$ and any $v\in \R^4$ we have $g(K(v))=K(g(v))$, since $K$ is a normal subgroup of $G$. It follows that, for an arbitrary point $w\in \Pi$ the orbit $K(g(w))$ is a singleton. This implies that the element $g$ preserves $\Pi$, and so, $\Pi$ is $G$-invariant. This induces a representation $G\rightarrow SO(2)\times SO(2)$, where $SO(2)$ is the identity component of the group of linear isometries of $\Pi$ (and $\Pi^\perp$). This map is onto, since the action is of cohomogeneity one. Since $\dim G=2$, the kernel of this representation is discrete, and by the faithfulness of the action it should be trivial.

\end{proof}

	By the proof of Proposition \ref{pro} one gets that the action of $SO(2)\times SO(2)$ admits two $1$-dimensional orbits $\mathcal{O}_1,\mathcal{O}_2$ each of them isometric to the circle $\Sn^1$, the one consists of the unit vectors in $\Pi$, and the other corresponds to those of $\Pi^\perp$. Furthermore, $SO(2)\times SO(2)$ acts on the complement $\Sn^3\setminus(\mathcal{O}_1\cup\mathcal{O}_2)$ freely. Hence, it admits a codimension 1 foliation on $\Sn^3\setminus (\mathcal{O}_1\cup\mathcal{O}_2)$ on which every leaf is homothetic to the $2$-torus $\mathbb{T}^2$. 

As an immediate consequence of the proof of Proposition \ref{pro} one gets the following corollary.

\begin{corollary}
Let $G\subset SO(4)$ be a connected Lie subgroup which acts on $\Sn^3$ with cohomogeneity one. Then, $G$ is a closed subgroup of $SO(4)$, therefore, it acts on $\Sn^3$ properly.
\end{corollary}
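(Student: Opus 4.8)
The plan is to pass to the closure of $G$ and reduce to the already–classified proper situation. Write $\overline{G}$ for the closure of $G$ in $SO(4)$. Since $SO(4)$ is compact, $\overline{G}$ is a compact connected Lie subgroup, and being compact it acts properly on $\Sn^3$; moreover any closed subgroup of a compact group is compact, so it suffices to prove $G=\overline{G}$. Because $G\subseteq\overline{G}$, every $\overline{G}$-orbit contains a $G$-orbit, so the cohomogeneity of $\overline{G}$ is at most that of $G$, i.e.\ at most one. On the other hand, cohomogeneity one forces a two–dimensional $G$-orbit in the three–manifold $\Sn^3$, whence $\dim G\geq 2$.

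First I would treat the case in which $\overline{G}$ acts with cohomogeneity one. Then Proposition \ref{pro} applies to the proper action of $\overline{G}$ and yields, up to conjugacy, $\overline{G}=SO(3)$ or $\overline{G}=SO(2)\times SO(2)$. In the toral case $\dim\overline{G}=2$, so $2\leq\dim G\leq\dim\overline{G}=2$ and $G$ is an open connected subgroup of the connected group $\overline{G}$, hence $G=\overline{G}$. In the case $\overline{G}=SO(3)$ I would invoke Lemma \ref{lem.so}: the connected subgroups of $SO(3)$ are $\{Id\}$, $SO(2)$, $SO(3)$, of dimensions $0,1,3$, so the only one of dimension $\geq 2$ is $SO(3)$ itself, and again $G=\overline{G}$. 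Either way $G$ is closed.

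The remaining and genuinely delicate case is when $\overline{G}$ acts with cohomogeneity zero; this is where a failure of closedness could hide, since a dense connected $G$ could a priori sweep out two–dimensional orbits whose closures fill $\Sn^3$. By Lemma \ref{lem.prop} the compact group $\overline{G}$ then acts transitively on $\Sn^3$, so $\dim\overline{G}\geq 3$ and $\overline{G}$ is, up to conjugacy, one of the connected compact transitive subgroups of $SO(4)$ on $\Sn^3$, namely $SU(2)$, $U(2)$, or $SO(4)$. To exclude this I would use the standard structure of closures of connected subgroups of a compact Lie group: the discrepancy between $G$ and $\overline{G}$ lives only in a central toral factor, so that the derived subgroup satisfies $[\overline{G},\overline{G}]\subseteq G$. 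For the three groups above the derived subgroups are $SU(2)$, $SU(2)$, $SO(4)$ respectively, each of which already acts transitively on $\Sn^3$. Hence $G\supseteq[\overline{G},\overline{G}]$ would itself act transitively, i.e.\ with cohomogeneity zero, contradicting the hypothesis that $G$ has cohomogeneity one. Therefore this case cannot occur.

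Combining the two cases, $G=\overline{G}$ is closed in $SO(4)$, hence compact, and therefore its action on the compact space $\Sn^3$ is proper. I expect the main obstacle to be precisely the transitive case: one must rule out a dense, non-closed connected subgroup producing two–dimensional orbits, and the key input making this routine is the containment $[\overline{G},\overline{G}]\subseteq G$ together with the short list of connected transitive subgroups of $SO(4)$, each of whose derived subgroups is again transitive.
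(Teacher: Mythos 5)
Your proof is correct, but it takes a genuinely different route from the paper's. The paper splits according to whether the linear action of $G$ on $\R^4$ is reducible or irreducible: in the reducible case it reruns the analysis of Proposition \ref{pro} to land on $SO(3)$ or $SO(2)\times SO(2)$, which are closed, and in the irreducible case it simply quotes the theorem of Di Scala--Leistner--Neukirchner \cite{Di2} that a connected Lie subgroup of $GL(4,\R)$ acting irreducibly is closed. You instead split according to the cohomogeneity of the closure $\overline{G}$: the cohomogeneity-one case follows from Proposition \ref{pro} applied to $\overline{G}$ together with Lemma \ref{lem.so} (and the dimension count for the torus), while the transitive case (via Lemma \ref{lem.prop}) is excluded using two classical inputs the paper does not invoke, namely the structure theorem for closures of connected subgroups of a compact Lie group, which gives $[\overline{G},\overline{G}]\subseteq G$, and the Montgomery--Samelson classification of compact connected groups acting transitively on $\Sn^3$ (namely $SO(4)$, $U(2)$, $SU(2)$), whose derived groups are again transitive. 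Both arguments are complete: the paper's is shorter given the external citation, whereas yours trades the representation-theoretic closedness theorem for compact-group structure theory and stays internal to the dynamics, showing directly that a dense non-closed connected subgroup cannot act with cohomogeneity one. Two refinements would strengthen your write-up: the containment $[\overline{G},\overline{G}]\subseteq G$ is the load-bearing step and deserves a precise citation (e.g.\ Bourbaki or Hochschild); and in the transitive case you could avoid the derived-group formalism altogether by noting that for each of $SO(4)$, $U(2)$, $SU(2)$ the ideal structure of the Lie algebra forces any dense connected subgroup to be the whole group, since the semisimple factors are closed and the central torus of $U(2)$ is one-dimensional.
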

\begin{proof}
Consider the action of $G$ on $\R^4$. If $G$ acts on $\R^4$ reducibly, then by a similar argument of that of the proof of Proposition \ref{pro} one can see that, $G$  is conjugate either to $SO(3)$ or $SO(2)\times SO(2)$, which are closed subgroups of $SO(4)$. If $G$ acts on $\R^4$ irreducibly, then by \cite[Theorem 1]{Di2}, $G$ is a closed subgroup of $GL(4,\R)$. This implies that $G$ is a closed subgroup of $SO(4)$. This completes the proof.
\end{proof}
\section{Actions on the $3$-dimensional Euclidean space $\En^3$}\label{sec.euc}
Let $G$ be a Lie subgroup of $\Conf(\Sn^3)$ which fixes a point $p_0\in \Sn^3$. As we mentioned earlier, the complement $\Sn^3\setminus \{p_0\}$ is conformally equivalent to the $3$-dimensional Euclidean space $\En^3$. This induces a representation form $G$ into $\Conf(\En^3)$. Choosing a point $o\in \En^{3}$ as the \textit{origin}, the conformal group $\Conf(\En^{3})$ splits as the semi-direct product $(\R^*\times O(3))\ltimes \R^3$. Thus, $\Conf(\En^3)$ consists of the elements of the following form
\[f_{\alpha,A,v}:\En^3\rightarrow \En^3,\;\;\;\;\;\;\;x\mapsto \alpha A(x)+v,\]
where $\alpha\neq 0$ is a homothety (centred at $o$), $A\in O(3)$ a linear isometry, and $v\in \R^3$ a translation. 
%Therefore, $\Conf(\En^3)$ is isomorphic to the semi-direct product $(\R^*\times O(3))\ltimes \R^3$. 

The group operation rule in $\Conf(\En^3)\simeq (\R^*\times O(3))\ltimes \R^3$ is as follows:
\[(\alpha,A,v).(\beta,B,w)=:(\alpha\beta,AB,\alpha A(w)+v),\;\;\;\;\;\;\;\alpha,\beta\in \R^*,\;A,B\in O(3),\;v,w\in \R^3.\]

The Lie algebra of $\Conf(\En^3)$ is isomorphic to the semi-direct sum $(\R\oplus \so(3))\oplus_\pi\R^3$, where $\pi$ is the natural representation of $\R\oplus \so(3)$ in $gl(\R^3)$. Hence, the Lie bracket rule in this Lie algebra is 
\[[\eta+V+v,\xi+W+w]=[V,W]+V(w)+\eta w-W(v)-\xi v,\;\;\;\;\;\;\;\eta,\xi\in \R,\;V,W\in \so(3),\;v,w\in \R^3.\]

Therefore, the adjoint action of $\Conf(\En^3)$ on its own Lie algebra is as follows. For $(r,A,v)\in (\R^*\times O(3))\ltimes \R^{3}$
\[Ad_{(r,A,v)}:(\R\oplus \so(3))\oplus_\pi \R^3 \longrightarrow (\R\oplus \so(3))\oplus_\pi \R^3,\;\;\;\;\;\;\;  b+W+w\mapsto b+AWA^{-1}+rA(w)-bv-AWA^{-1}(v).\]

The following maps are Lie algebra morphisms:
\begin{align*}
&p_l:(\R\oplus \so(3))\oplus_\pi\R^3\longrightarrow \R\oplus \so(3),\;\;\;\;\;a+V+v\mapsto a+V\\
&p_{li}:(\R\oplus \so(3))\oplus_\pi\R^3\longrightarrow \so(3),\;\;\;\;\;\;a+V+v\mapsto V\\
&p_h:(\R\oplus \so(3))\oplus_\pi\R^3\longrightarrow \R,\;\;\;\;\;\;\;a+V+v\mapsto a.
\end{align*}

For a Lie subalgebra $\g$ of $(\R\oplus \so(3))\ltimes \R^3$, the kernel $\ker p_l|_\g$ is called the translation part of $\g$ and we denote it by $T(\g)$. It is a linear subspace of $\R^3$ and an ideal of $\g$, and so, it is invariant by the natural action of $p_{li}(\g)$.

Let $\{e_1,e_2,e_3\}$ be the standard basis for $\R^3$. The set of the following matrices is a basis for $\so(3)$:
\begin{align*}
X=\begin{bmatrix}
0 & 1 & 0\\
-1 & 0 & 0\\
0 & 0 & 0
\end{bmatrix},\;\;\;\;Y=\begin{bmatrix}
0 & 0 & 1\\
0 & 0 & 0\\
-1& 0 & 0
\end{bmatrix},\;\;\;\;\;Z=\begin{bmatrix}
0 & 0 & 0\\
0 & 0 & 1\\
0 & -1 & 0
\end{bmatrix}.
\end{align*}
We have $[X,Y]=-Z$, $[X,Z]=Y$, and $[Y,Z]=-X$. Also, we have $X(e_1)=-Z(e_3)=-e_2$, $X(e_2)=Y(e_3)=e_1$, $Y(e_1)=Z(e_3)=-e_3$, and $X(e_3)=Y(e_2)=Z(e_1)=0$.

By Lemma \ref{lem.so}, $\so(3)$ has a unique non-trivial proper Lie subalgebra, up to conjugacy, namely, the subalgebra generated by $X$. The $X$-invariant subspace of $\R^3$ are the $2$-plane $\mathscr{P}$ generated by $\{e_1,e_2\}$ and the line $\el$ generated by $e_3$.
 
 We specify the following $1$-parameter subgroups of $(\R^*\times O(3))\ltimes \R^3$
 \begin{align*}
 &\mathcal{N}_a=\left\lbrace\exp\big(t(a+X)\big):t\in \R\right\rbrace=\left\lbrace\left(e^{at},\begin{bmatrix}
 \cos t & \sin t & 0\\
 -\sin t & \cos t & 0\\
 0 & 0 & 1
 \end{bmatrix},\begin{bmatrix}
 0\\
 0\\
 0
 \end{bmatrix}\right):t\in \R\right\rbrace,\\
& \mathcal{S}=\left\lbrace\exp\big(t(X+e_3)\big):t\in\R\right\rbrace=\left\lbrace\left(1,\begin{bmatrix}
 \cos t & \sin t & 0	\\
 -\sin t & \cos t & 0\\
 0 & 0 & 1
 \end{bmatrix},\begin{bmatrix}
 0\\
 0\\
 t
 \end{bmatrix}\right):t\in \R\right\rbrace,
 \end{align*}
 where $a\in \R^*$ is a constant number.
\begin{theorem}\label{thm.euc.1}
Let $G\subset \Conf(\En^3)$ be a connected Lie subgroup which acts on $\En^3$ with cohomogeneity one. Then, up to conjugacy, $G$ admits the same orbits as those induced by one of the following groups
\begin{align}\label{list.euc.1}
\mathscr{P},\;\;\;\;\;\;  SO(2)\times \el,\;\;\;\;\;\;\mathcal{N}_a\ltimes \el,\;\;\;\;\;\;\R_+^*\ltimes \el,\;\;\;\;\;\;SO(3),\;\;\;\;\;\;\R_+^*\times SO(2).
\end{align}
\end{theorem}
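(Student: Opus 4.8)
The plan is to work entirely at the level of the Lie algebra $\g$ of $G$ inside $(\R\oplus\so(3))\oplus_\pi\R^3$ and to turn the cohomogeneity condition into a rank condition. For $x\in\En^3$ the conformal vector field attached to $a+V+v\in\g$ has value $ax+V(x)+v$ at $x$, so the tangent space to the orbit $G(x)$ is the image of the evaluation map $\mathrm{ev}_x:\g\to\R^3$, $a+V+v\mapsto ax+V(x)+v$; hence $G$ acts with cohomogeneity one exactly when $\max_x\dim\mathrm{ev}_x(\g)=2$. First I would exploit the morphism $p_{li}$: since $p_{li}(\g)$ is a subalgebra of $\so(3)$, Lemma \ref{lem.so} lets me assume, after conjugating by an element of $O(3)$, that $p_{li}(\g)$ is one of $0$, $\langle X\rangle$, or $\so(3)$. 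In parallel I record that the translation part $T(\g)$ is a $p_{li}(\g)$-invariant subspace of $\R^3$ and that $p_h(\g)$ is $0$ or $\R$; these three data, together with the freedom of conjugating by a translation, will pin down $\g$.

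Next I would dispose of the two extreme cases. For $p_{li}(\g)=\so(3)$ the module $\R^3$ is irreducible, so $T(\g)\in\{0,\R^3\}$: the choice $T(\g)=\R^3$ already contains the full isometry group and is transitive (cohomogeneity zero), while $T(\g)=0$ makes $p_l$ injective, so $\g$ is the graph of a $1$-cocycle valued in $\R^3$; since $H^1(\so(3),\R^3)=0$ by Whitehead's lemma and conjugation by a translation realizes precisely the coboundaries, the cocycle can be annihilated, leaving $\g=\so(3)$ (giving $SO(3)$) or $\g=\R\oplus\so(3)$ (transitive on $\En^3\setminus\{o\}$, hence cohomogeneity zero). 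For $p_{li}(\g)=0$ one has $\g\subseteq\R\oplus\R^3$ with the scalar action, and computing $\mathrm{ev}_x$ forces either a two-dimensional $T(\g)$ with no homothety, giving $\mathscr{P}$, or a one-dimensional $T(\g)$ together with a homothety, which after killing the translation part by a suitable conjugation yields $\R_+^*\ltimes\el$.

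The substantial case is $p_{li}(\g)=\langle X\rangle$. Here $p_l(\g)\subseteq\R\oplus\langle X\rangle$ must surject onto $\langle X\rangle$, so it is exactly $\langle X\rangle$, $\langle a+X\rangle$ with $a\neq0$, or $\R\oplus\langle X\rangle$, while $T(\g)\in\{0,\el,\mathscr{P},\R^3\}$. For each pair I would pick generators $(0,X,w_0)$, $(a,X,w_0)$, or a homothety–rotation pair, and normalize $w_0$ by conjugating with a translation $v_0$, which alters $w_0$ by $av_0+X(v_0)$. The decisive linear-algebra fact is that $aI+X$ is invertible on $\R^3$ when $a\neq0$ (its restriction to $\mathscr{P}$ has determinant $a^2+1$), so in the screw case the generator becomes purely linear, whereas for $a=0$ only the $\mathscr{P}$-component of $w_0$ can be removed. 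Imposing generic rank $2$ on $\mathrm{ev}_x(\g)$, which in each subcase equals $\langle v(x)\rangle+T(\g)$ for an explicit $v(x)$, then selects exactly $SO(2)\times\el$, $\mathcal{N}_a\ltimes\el$, and $\R_+^*\times SO(2)$, discarding the remaining choices of $T(\g)$ as either transitive or of rank $\leq1$.

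I expect the main obstacle to be the bookkeeping in this last case, and especially the passage from strict conjugacy to orbit equivalence: several distinct subgroups share the same orbit foliation and must be merged. The cleanest instance is $p_l(\g)=\langle X\rangle$ with $T(\g)=\mathscr{P}$ and $w_0\in\mathscr{P}$, which produces $SO(2)\ltimes\mathscr{P}$; this group is transitive on each horizontal plane $\{x_3=\text{const}\}$ and hence has the same orbits as the pure translation group $\mathscr{P}$, contributing nothing new to \eqref{list.euc.1}. Verifying that every admissible $\g$ either appears in \eqref{list.euc.1} or is orbit-equivalent to one that does, and that the generic rank is genuinely $2$ rather than accidentally $1$ or $3$ on an open set, is where care is required; the subalgebra-closure relations — for example that the bracket of a homothety generator with a rotation generator lands in $T(\g)$ — are exactly what make the successive normalizations consistent.
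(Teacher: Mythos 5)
Your proposal is correct and takes essentially the same route as the paper: its Proposition \ref{prop.group} carries out exactly your classification of subalgebras via the projections $p_{li}$, $p_l$, $p_h$, the $p_{li}(\g)$-invariant translation part $T(\g)$, Lemma \ref{lem.so}, and normalization by $Ad_{(1,Id,x)}$ (including the invertibility of $a+X$ for $a\neq 0$, which the paper solves explicitly with determinant $a^2+1$ on $\mathscr{P}$), after which it computes the orbit foliations and performs the same mergers you describe (e.g.\ $SO(2)\ltimes\mathscr{P}$ with $\mathscr{P}$, and discarding the transitive screw group $\mathcal{S}\ltimes\mathscr{P}$ and the cohomogeneity-zero groups such as $\R_+^*\ltimes\mathscr{P}$ and $(\R_+^*\times SO(2))\ltimes\el$). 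Your only departures are cosmetic: you interleave the rank-of-$\mathrm{ev}_x$ cohomogeneity test into the classification instead of classifying first and examining orbits afterwards, and you invoke Whitehead's lemma $H^1(\so(3),\R^3)=0$ where the paper annihilates the cocycle by a direct bracket computation.
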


\begin{theorem}\label{thm.euc.0}
Let $G\subset\Conf(\En^3)$ be a connected Lie subgroup which acts on $\En^3$ with cohomogeneity zero. Then, up to conjugacy, $G$ admits the same orbits as those induced by one of the following groups
\begin{align}\label{list.euc.0}
\R^3,\;\;\;\;\;\;\; \R_+^*\ltimes \mathscr{P},\;\;\;\;\;\;\;(\R_+^*\times SO(2))\ltimes \el,\;\;\;\;\;\;\;\R_+^*\times SO(3).
\end{align}
\end{theorem}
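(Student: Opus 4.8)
The plan is to pass to the Lie algebra $\g$ of $G$, viewed as a subalgebra of $(\R\oplus\so(3))\oplus_\pi\R^3$, and to read off the orbits from the infinitesimal action. The vector field generated by $\eta+V+v\in\g$ is $x\mapsto \eta x+V(x)+v$, so the tangent space to the orbit through $x$ is the image of the evaluation map $\mathrm{ev}_x:\g\to\R^3,\ \eta+V+v\mapsto \eta x+V(x)+v$. Thus cohomogeneity zero is equivalent to $\mathrm{ev}_x$ being surjective for $x$ in a dense open set, and in particular forces $\dim\g\ge 3$. Exactly as in the proof of Theorem \ref{thm.euc.1}, I would organize the argument by the value of $p_{li}(\g)\subseteq\so(3)$, which by Lemma \ref{lem.so} is one of $\{0\}$, $\R X$, or $\so(3)$, together with the translation part $T(\g)=\ker p_l|_\g$, an ideal of $\g$ that is invariant under the natural $p_{li}(\g)$-action on $\R^3$.

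First I would treat the case $p_{li}(\g)=\so(3)$. Here $p_l(\g)$ is a subalgebra of $\R\oplus\so(3)$ surjecting onto $\so(3)$; since $\so(3)$ is perfect and $\R$ is central, $p_l(\g)$ is either $\so(3)$ or $\R\oplus\so(3)$, and in either case contains the ideal $\so(3)$. Using a Levi decomposition of the preimage in $\g$ of this ideal, $\g$ contains a copy $\{V+c(V):V\in\so(3)\}$ of $\so(3)$, where $c$ is a $1$-cocycle into $T(\g)\subseteq\R^3$. Because $\so(3)$ is semisimple, $H^1(\so(3),\R^3)=0$, so $c(V)=V(w)$ is a coboundary; conjugating $\g$ by the translation $\tau_w$ then yields the standard $\so(3)\subseteq\g$. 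Since $\R^3$ is an irreducible $\so(3)$-module, the invariant subspace $T(\g)$ is $0$ or $\R^3$. If $T(\g)=\R^3$ then $\g\supseteq\so(3)\oplus\R^3$ acts transitively. If $T(\g)=0$ then either $\g=\so(3)$ (cohomogeneity one, excluded) or, when a dilation is present, a short bracket computation forces that dilation generator to carry no translation part, giving $\g=\so(3)\oplus\R$, i.e.\ $G=\R_+^*\times SO(3)$.

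Next I would handle $p_{li}(\g)=\R X$ and $p_{li}(\g)=0$. In the first, $T(\g)$ lies among the $X$-invariant subspaces $0,\el,\mathscr{P},\R^3$, and $p_l(\g)$ is one of $\R X$, $\R(a+X)$, or $\R\oplus\R X$; in the second $\g\subseteq\R\oplus\R^3$ and $p_l(\g)$ is $0$ or $\R$. Running through these finitely many combinations, I would discard those for which $\mathrm{ev}_x$ never surjects (these have $\dim\g\le 2$ or keep the orbit inside a fixed plane, so they are at most cohomogeneity one and already appear in Theorem \ref{thm.euc.1}), and compute the orbits of the survivors. The combination $p_l(\g)=\R\oplus\R X$, $T(\g)=\el$ gives, after conjugating by a translation in $\mathscr{P}$ to absorb the $\mathscr{P}$-components of the dilation and rotation generators, the model $(\R_+^*\times SO(2))\ltimes\el$. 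The combinations with $T(\g)=\mathscr{P}$ and a dilation present, including the screw generator $a+X$, all produce the orbit decomposition into the two open half-spaces $\{x_3>0\},\{x_3<0\}$ and the plane $\{x_3=0\}$, hence are orbit equivalent to $\R_+^*\ltimes\mathscr{P}$. Every remaining surviving case has $T(\g)=\R^3$ and is transitive. The case $p_{li}(\g)=0$ likewise yields only $\R^3$ (when $\dim T(\g)=3$) and $\R_+^*\ltimes\mathscr{P}$ (when $\dim T(\g)=2$, after rotating $T(\g)$ to $\mathscr{P}$).

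The routine part is the bookkeeping of subalgebra types, which is finite and controlled by Lemma \ref{lem.so}. I expect the main obstacle to be \emph{orbit equivalence} rather than the list of Lie algebras: the delicate steps are (a) removing the translational twist in the $\so(3)$ case through the vanishing of $H^1(\so(3),\R^3)$ and a translation conjugation, and (b) showing that the a priori distinct groups produced above — the various screw and translation-enriched groups, together with every transitive action — collapse onto just the four listed models. Concretely, one must verify that all transitive subgroups share the single orbit $\En^3$ of the translation group $\R^3$, and that the one-parameter screw groups generated by $a+X$ combined with $\mathscr{P}$-translations foliate $\En^3$ exactly as $\R_+^*\ltimes\mathscr{P}$ does; establishing these orbit coincidences (by the explicit flows, or by conjugating each $\g$ to one of the four standard subalgebras) is where the real content lies.
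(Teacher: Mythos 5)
Your proposal is correct and takes essentially the same route as the paper: the paper's proof of Proposition \ref{prop.group} runs exactly your case analysis on $p_{li}(\g)\in\{0,\R X,\so(3)\}$, $T(\g)$ and $p_l(\g)$ with normalizing conjugations by translations (your appeal to $H^1(\so(3),\R^3)=0$ is just a conceptual repackaging of the paper's explicit bracket identities $v-X(u)=0$, etc., followed by $Ad_{(1,Id,u)}$), and your orbit computations via the evaluation map match the paper's subsequent orbit-by-orbit discussion, including the collapse of $\mathcal{N}_a\ltimes\mathscr{P}$ and $(\R_+^*\times SO(2))\ltimes\mathscr{P}$ onto $\R_+^*\ltimes\mathscr{P}$ and of all transitive groups onto $\R^3$. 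One minor slip worth fixing: the surviving group $\mathcal{S}\ltimes\mathscr{P}$ (generator $X+e_3$, arising in your bookkeeping when $a=0$ but the generator carries an irremovable $\el$-translation) is transitive yet has $T(\g)=\mathscr{P}$, so your claim that ``every remaining surviving case has $T(\g)=\R^3$'' is literally false --- harmlessly, since transitivity still places it in the orbit class of $\R^3$, as in the paper.
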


Theorems \ref{thm.euc.1} and \ref{thm.euc.0} follow from the following proposition. It classifies, up to conjugacy, all the connected Lie subgroups of $(\R^*\times O(3))\ltimes \R^3$ with $\dim \geq 2$. 

\begin{proposition}\label{prop.group}
Let $G$ be a connected Lie subgroup of $(\R^*\times O(3))\ltimes \R^3$ with $\dim G\geq 2$. Then $G$ is conjugate to one of the Lie subgroups in Table \ref{table5}.
\end{proposition}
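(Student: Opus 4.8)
The plan is to work entirely at the level of Lie algebras, since a connected Lie subgroup of the fixed group $(\R^*\times O(3))\ltimes\R^3$ is determined up to conjugacy by its Lie algebra $\g\subseteq(\R\oplus\so(3))\oplus_\pi\R^3$. The first step is to attach to $\g$ the two invariants supplied by the morphisms above: the image $\h:=p_l(\g)\subseteq\R\oplus\so(3)$ and the translation part $T:=T(\g)=\ker p_l|_\g\subseteq\R^3$, fitting into a short exact sequence $0\to T\to\g\to\h\to 0$ with $\dim\g=\dim\h+\dim T$. I would first classify $\h$ up to conjugacy. Since $p_{li}(\h)$ is a subalgebra of $\so(3)$, Lemma \ref{lem.so} forces $p_{li}(\h)\in\{0,\R X,\so(3)\}$ up to conjugacy; combining this with the at-most-one-dimensional homothety part detected by $p_h$, and using that $\so(3)$ is perfect to rule out graphs of a homomorphism $\so(3)\to\R$, one finds that $\h$ is conjugate to exactly one of $0,\ \R,\ \R X,\ \R(a+X)\ (a\neq0),\ \R\oplus\R X,\ \so(3),\ \R\oplus\so(3)$.

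Next, for each candidate $\h$ I would list the admissible $T$. Because $T$ is an ideal invariant under the natural action of $p_{li}(\h)$, it is a $p_{li}(\h)$-invariant subspace of $\R^3$: when $p_{li}(\h)=0$ every subspace occurs, but up to $O(3)$-conjugacy these collapse to $0,\el,\mathscr{P},\R^3$; when $p_{li}(\h)=\R X$ the invariant subspaces are again exactly $0,\el,\mathscr{P},\R^3$; and when $p_{li}(\h)=\so(3)$ irreducibility leaves only $0$ and $\R^3$. For a fixed pair $(\h,T)$, the subalgebras $\g$ with $p_l(\g)=\h$ and $T(\g)=T$ are the graphs of linear sections $w:\h\to\R^3/T$ of $p_l$. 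Substituting the bracket rule given above into the requirement that $\g$ be closed under brackets turns this into the cocycle equation $w([h_1,h_2])=\pi(h_1)w(h_2)-\pi(h_2)w(h_1)$ in $\R^3/T$, while the adjoint formula given above shows that conjugating $b+V+w$ by a pure translation $v$ replaces its translation part by $w-(b\,\mathrm{id}+V)(v)=w-\pi(b+V)(v)$, i.e. alters the section by the coboundary $h\mapsto-\pi(h)(v)$. Hence the conjugacy classes over a fixed $(\h,T)$ are governed by the first cohomology $H^1(\h,\R^3/T)$.

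The third step is to solve these cohomology problems and then quotient by the residual conjugations in $\R^*\times O(3)$ preserving the standard position of $(\h,T)$. Three mechanisms do almost all the work: whenever the homothety generator is present and acts invertibly on $\R^3/T$ (as for $\h=\R$ or $\h=\R(a+X)$), the coboundaries already exhaust the cocycles, so $H^1=0$ and the section is removable; whenever $p_{li}(\h)=\so(3)$, Whitehead's first lemma gives $H^1(\so(3),-)=0$ for free; and the only genuinely non-trivial classes arise when the generator acts trivially on the quotient, e.g.\ $\h=\R X$ with $T=\mathscr{P}$ so that $\R^3/T\cong\el$ is a trivial module, producing a one-parameter family $\R(X+ce_3)\oplus\mathscr{P}$. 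I would then normalize such parameters using the homothety conjugation $\mathrm{Ad}_r$, which rescales the $e_3$-component and collapses the nonzero values of $c$ to a single representative (and likewise confirms that the pitch $a$ in $\mathcal{N}_a$ is a genuine modulus, reduced only up to sign by the orientation-reversing part of $O(3)$). Assembling the surviving algebras over all $(\h,T)$, discarding those of dimension $\le1$, and exponentiating then yields precisely the groups of Table \ref{table5}.

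The main obstacle is the bookkeeping in the third step: $H^1(\h,\R^3/T)$ must be evaluated in each of the finitely many $(\h,T)$ cases, and where it is nonzero one must identify the residual normalizer of $(\h,T)$ inside $\R^*\times O(3)$ to decide which cocycle representatives give genuinely non-conjugate subalgebras. I expect the delicate points to be (i) the cases $\h=\R X$ and $\h=\R(a+X)$ with $T=\mathscr{P}$ or $T=\el$, where whether the generator acts trivially or invertibly on the quotient decides whether a screw-type parameter survives, and (ii) verifying that algebras produced from different pairs $(\h,T)$ are never accidentally conjugate, which requires comparing conjugacy invariants such as $\dim T$, $\dim\h$, and the isomorphism type of the $p_{li}(\h)$-action.
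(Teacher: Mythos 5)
Your proposal is correct, and it reaches exactly the $20$ algebras of Table \ref{table5}, but it runs on a different engine than the paper. The skeleton is shared: the paper also classifies subalgebras $\g\subseteq(\R\oplus\so(3))\oplus_\pi\R^3$ by first pinning down $p_{li}(\g)$ via Lemma \ref{lem.so}, then the $p_{li}(\g)$-invariant translation part $T(\g)$, and then normalizing the remaining translation components. Where the paper does this last step by hand --- choosing explicit bases $\{\lambda+u, X+v,\dots\}$, extracting relations such as $v-X(u)\in T(\g)$ from bracket closure, and solving linear systems like $u-ax-X(x)=0$ for explicit conjugators $\mathrm{Ad}_{(1,\mathrm{Id},x)}$ --- you repackage the whole step as the computation of $H^1(\h,\R^3/T)$, with vanishing supplied uniformly by the central homothety acting invertibly (covering $\h=\R$, $\R(a+X)$ with $a\neq0$, $\R\oplus\R X$, $\R\oplus\so(3)$) and by Whitehead's lemma for $\h=\so(3)$, leaving the trivial-action cases $\h=\R X$ with $\R^3/T\cong\el$ as the only sources of nontrivial classes; these produce the screw algebra $\R(X+e_3)\oplus_\pi\mathscr{P}$ (i.e.\ $\mathcal{S}\ltimes\mathscr{P}$, after rescaling the cocycle parameter by a homothety, exactly the paper's $\mathrm{Ad}_{(1/u_3,\mathrm{Id},0)}$ move) and the one-dimensional screw line over $T=0$, which your dimension cut discards. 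Your route buys three things: uniformity; a transparent proof that $\mathcal{S}\ltimes\mathscr{P}$ and $SO(2)\ltimes\mathscr{P}$ are genuinely non-conjugate (coboundaries land in $X(\R^3)\subseteq\mathscr{P}$, so they cannot touch the $e_3$-component, and the normalizer only rescales it --- a point the paper leaves implicit); and it silently repairs a small glitch in the paper, where in the case $T(\g)=\el$, $\dim p_l(\g)=1$ the conjugator's third coordinate $u_3/a$ is undefined at $a=0$, while in your framework $X$ acts invertibly on $\R^3/\el\cong\mathscr{P}$ so $H^1=0$ regardless, only the $\mathscr{P}$-component needing to be killed. The cost is the two-step bookkeeping you correctly flag: $H^1$ only accounts for conjugation by translations, and the residual $\R^*\times O(3)$-normalizer must be applied afterwards, both to normalize cocycle parameters and to compare families. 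One minor inaccuracy there: the identification $\mathcal{N}_a\sim\mathcal{N}_{-a}$ does not require the orientation-reversing part of $O(3)$ --- the rotation by $\pi$ about the $e_1$-axis lies in $SO(3)$ and already conjugates $X$ to $-X$, hence $\R(a+X)$ to $\R(-a+X)$; note also that the paper itself never performs this sign reduction, as Table \ref{table5} simply carries $a\in\R^*$ as a parameter, so this refinement, while true, is not needed for the statement.
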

\begin{table}[h!]
\centering
\begin{tabular}{|c| c| c|c| } 
% \hline
%  \multicolumn{4}{|c|}{\textbf{Subgroups acting on $\En^3$ by cohomogeneity zero or one}} \\
  \hline
 $\R^*_+\ltimes \R^{3}$ & $(\R_+^*\times SO(3))\ltimes \R^{3}$ & $SO(3)\ltimes \R^3$ &  $\mathcal{N}_a \ltimes \R^{3}$\\
  \hline
 $\R^{3}$ & $SO(2)\ltimes \R^3$ & $(\R_+^*\times SO(2))\ltimes \R^3$ & $\mathcal{S}\ltimes \mathscr{P}$\\
  \hline
$SO(2)\ltimes \mathscr{P}$  &  $(\R_+^*\times SO(2))\ltimes \mathscr{P}$ &  $\R_+^*\ltimes \mathscr{P}$  & $\mathcal{N}_a\ltimes \mathscr{P}$\\
  \hline
$\mathscr{P}$ &  $SO(2)\times \el$ & $(\R_+^*\times SO(2))\ltimes \el$ & $ \mathcal{N}_a\ltimes \el$ \\

  \hline
$\R_+^*\ltimes \el$ &  $\R_+^*\times SO(3)$ & $SO(3)$& $\R_+^*\times SO(2)$ \\
\hline
\end{tabular}
\caption{Here $a\in \R^*$ is a constant number, $\mathscr{P}=\R e_1\oplus \R e_2$, and $\el=\R e_3$.}
\label{table5}
\end{table}
\begin{proof}
We classify the connected Lie subgroups of $H=(\R^*\times O(3))\ltimes \R^3$ by their corresponding Lie subalgebras in $\h=(\R\oplus \so(3))\oplus_\pi\R^3$ and applying the adjoint action of $H$ on $\h$. We take $\lambda=1$ as a generator (basis) of $\R$.

Let $\g$ be a Lie subalgebra of $\h$ with $\dim \g\geq 2$.

First assume that $\dim p_{li}(\g)=3$. Then clearly $p_{li}(\g)=so(3)$. The $\so(3)$-invariant subspace of $\R^3$ are $\R^3$ itself and $\{0\}$. Therefore, the translation part of $T(\g)$ is either $\R^3$ or $\{0\}$
\begin{itemize}
\item If $T(\g)=\R^3$:
\begin{itemize}
\item If $\dim p_l(\g)=4$, then obviously $\g=\h$.
\item If $\dim p_l(\g)=3$, it follows that $p_{li}:\g\rightarrow \so(3)$ is a Lie algebra isomorphism. Hence $f=p_h\circ p_{li}^{-1}:\so(3)\rightarrow p_h(\g)$ is a surjective Lie algebra morphism. Since $\so(3)$ is simple, it has no non-trivial ideal. It follows that $f\equiv 0$. This implies that $\g$ is conjugate to $\so(3)\oplus_\pi\R^3$.
\end{itemize}
\item If $T(\g)=\{0\}$:
\begin{itemize}
\item If $\dim p_l(\g)=4$, then there are four vectors $u,v,w,s\in \R^3$ such that $\{\lambda+u,X+v,Y+w,Z+s\}$ is a basis for $\g$. Considering the Lie bracket we have
\begin{align*}
[\lambda+u,X+v]= v-X(u)=0,\;\;\;\;\;\;\;[\lambda+u,Y+w]=w-Y(u)=0,\;\;\;\;\;[\lambda+u,Z+s]=s-Z(u)=0.
\end{align*}
From the above equations, we get
\begin{align*}
&Ad_{(1,Id,u)}(\lambda+u)=\lambda+u-u=\lambda,\;\;\;\;\;\;\;\;\;\;\;\;\;\;\;\;\;Ad_{(1,Id,u)}(X+v)=X+v-X(u)=X,\\
&Ad_{(1,Id,u)}(Y+w)=Y+w-Y(u)=Y,\;\;\;\;\;\;\;\;\;\;Ad_{(1,Id,u)}(Z+s)=Z+s-Z(u)=Z,
\end{align*}
Therefore, $\g$ is conjugate to $\R\oplus \so(3)$.
\item If $\dim p_l(\g)=3$, then $p_{li}(\g)=\so(3)$. It follows that $p_{li}:\g\rightarrow \so(3)$ is an isomorphism. Hence $\g$ is simple. Thus it is conjugate to the Levi factor of $\h$ which is $\so(3)$.
\end{itemize}
\end{itemize}
Now, suppose that $p_{li}(\g)$ is a $1$-dimensional Lie subalgebra. Since $\so(3)$ has, up to conjugacy, a unique $1$-dimensional Lie subalgebra, we may assume $p_{li}(\g)=\R X$. The $X$-invariant linear subspace of $\R^3$ are $\{0\}$, the line $\el=\R e_3$, the $2$-plane $\mathscr{P}=\R e_1\oplus \R e_2$, and $\R^3$.
\begin{itemize}
\item If $T(\g)=\R^3$:
\begin{itemize}
\item If $\dim p_l(\g)=2$, then clearly $\g=(\R\oplus \R X)\oplus_\pi\R^3$.
\item If $\dim p_{l}(\g)=1$, then there is a constant $a\in \R$ such that $\{a+X,e_1,e_2,e_3\}$ is a basis for $\g$. Therefore, $\g=\R(a+X)\oplus_\pi\R^3$.
\end{itemize}
\item If $T(\g)=\mathscr{P}$:
\begin{itemize}
\item If $\dim p_l(\g)=2$, then there are two vectors $u,v\in \R^3$ such that $\{\lambda+u,X+v,e_1,e_2\}$ is a basis for $\g$. By closeness of the Lie bracket in $\g$ we have
\[[\lambda+u,X+v]=v-X(u)\in \mathscr{P}.\]
Now,
\[Ad_{1,Id,u}(\lambda+u)=\lambda+u-u=\lambda,\;\;\;\;\;\;Ad_{(1,Id,u)}(X,v)=X+v-X(u).\]
Since $v-X(u)\in \mathscr{P}$, we have $X\in Ad_{(1,Id,u)}(\g)$. Hence, $\g$ is conjugate to the semi-direct sum $(\R\oplus\R X)\oplus_\pi\mathscr{P}$.
\item If $\dim p_l(\g)=1$, then there is a constant $a\in \R$ and a vector $u\in \R^3$ such that $\{a+X+u,e_1,e_2\}$ is a basis for $\g$. For a vector $x\in \R^3$ we have
\[Ad_{(1,Id,x)}(a+X+u)=a+X+u-ax-X(x).\]
If $a\neq 0$, the linear system $u-ax-X(x)=0$ has a unique solution, namely, $x=\left(\dfrac{au_1+u_2}{a^2+1},\dfrac{au_2+u_1}{a^2+1},\dfrac{u_3}{a}\right)$. Hence, $\g$ is conjugate to $\R(a+X)\oplus_\pi\mathscr{P}$.

If $a=0$, then $u-ax-X(x)\in \mathscr{P}$ if and only if $u_3=0$. Therefore, there are two cases: If $u_3=0$, then $\g$ is conjugate to the semi-direct sum $\R X\oplus_\pi\mathscr{P}$. Otherwise, $\g$ is conjugate to $ \R(X+e_3)\oplus_\pi\mathscr{P}$ via $Ad_{(1/u_3,Id,0)}$.
\end{itemize}
\item If $T(\g)=\el=\R e_3$:
\begin{itemize}
\item If $\dim p_l(\g)=2$, then there are two vectors $u,v\in \R^3$ such that $\{\lambda+u,X+v,e_3\}$ is a basis for $\g$. Considering the Lie bracket we have
\[[\lambda+u,X+v]=v-X(u)\in \el.\]
Now, we have 
\[Ad_{(1,Id,u)}(\lambda+u)=\lambda+u-u=\lambda,\;\;\;\;\;\;Ad_{(1,Id,u)}(X+v)=X+v-X(u).\]
It follows that $X\in \g'=Ad_{(1,Id,u)}(\g)$, since $v-X(u)\in T(\g')=T(\g)=\el$. Hence, $\g$ is conjugate to $(\R\oplus\R X)\oplus_\pi \el$.
\item If $\dim p_l(\g)=1$, then there is a constant $a\in \R$ and a vector $u\in \R^3$ such that $\{a+X+u,e_3\}$ is a basis for $\g$. We apply $Ad_{(1,Id,x)}$ on $\g$:
\[Ad_{(1,Id,x)}(a+X+u)=a+X+u-ax-X(x),\;\;\;\;\;\;Ad_{(1,Id,x)}(e_3)=e_3+x-x=e_3.\]
Setting $x=\left(\dfrac{au_1+u_2}{a^2+1},\dfrac{au_2+u_1}{a^2+1},\dfrac{u_3}{a}\right)$ we get $u-ax-X(x)\in \el$. Hence $a+X\in \g'=Ad_{(1,Id,x)}(\g)$, since $u-ax-X(x)\in T(\g')=T(\g)=\el$. Therefore, $\g$ is conjugate to the semi-direct sum $\R(a+X)\oplus_\pi \el$.
\end{itemize}
If $T(\g)=\{0\}$, then there are two vectors $u,v\in \R^3$ such that $\{\lambda+u,X+v\}$ is a basis for $\g$. By closeness of the Lie bracket we have:
\[[\lambda+u,X+v]=v-X(u)=0.\]
Hence, applying $Ad_{(1,Id,u)}$ on $\g$ we have
\[Ad_{(1,Id,u)}(\lambda+u)=\lambda+u-u=\lambda,\;\;\;\;\;Ad_{(i,Id,u)}(X+v)=X+v-X(u)=X.\]
Therefore, $\g$ is conjugate to $\R\oplus \R X$.
\end{itemize}
Finally, If $p_{li}(\g)=\{0_{3\times 3}\}$, then $\g$ is a Lie subalgebra of $\R\oplus_\pi \R^3$. Up to action of $SO(3)$ on $\R^3$, the subspaces $\el$ and $\mathscr{P}$ are unique $1$-dimensional and $2$-dimensional linear subspace of $\R^3$, respectively. Hence, if $p_h(\g)=\{0\}$, then $\g$ is conjugate to $\mathscr{P}$ or $\R^3$. From now on, we assume that $p_h(\g)=\R$.
\begin{itemize}
\item If $T(\g)=\R^3$, then obviously $\g=\R\oplus \R^3$.
\item If $T(\g)=\mathscr{P}$, then there is a vector $u\in \R^3$ such that $\{\lambda+u,e_1,e_2\}$ is a basis of $\g$. Applying $Ad_{(1,Id,u)}(\g)$ we get $Ad_{(1,Id,u)}(\g)=\R\oplus_\pi \mathscr{P}$. Therefore, $\g$ is conjugate to $\R\oplus_\pi\mathscr{P}$.
\item If $T(\g)=\el$,  then there is a vector $u\in \R^3$ such that $\{\lambda+u,e_3\}$ is a basis of $\g$. Applying $Ad_{(1,Id,u)}(\g)$ we get $Ad_{(1,Id,u)}(\g)=\R\oplus_\pi \el$. Therefore, $\g$ is conjugate to $\R\oplus_\pi\el$.
\end{itemize}
\end{proof}

Now, we consider the orbits induced by the actions of the subgroups obtained in Proposition \ref{prop.group}. Let $(x,y,z)$ be an orthonormal coordinate for $\En^{3}$. The origin $o$ has coordinate $(0,0,0)$. 

Consider the action of $G=\mathcal{S}\ltimes \mathscr{P}$ on $\En^3$. For an arbitrary point $p=(x,y,z)\in \En^3$, the vector tangent to the orbit $G(p)$ at $p$ induced by the $1$-parameter subgroup $\mathcal{S}\subset G$ is $v=(y,-x,1)$. Clearly, $\{v,e_1,e_2\}$ is a basis for $T_pG(p)$. Hence, all the orbits induced by $G$ in $\En^3$ are open, and since, $\En^3$ is connected, we conclude that, $G$ acts on $\En^3$ transitively. Also, every subgroup of $\Conf(\En^3)$ containing the full translation group $\R^3$ acts on $\En^3$ transitively. Therefore, the actions of the following groups on $\En^3$ are transitive:
\begin{align*}
& \R^*_+\ltimes \R^{3},\;\;\;\;\; (\R_+^*\times SO(3))\ltimes \R^{3},\;\;\;\;\; SO(3)\ltimes \R^3,\;\;\;\;\;\mathcal{N}_a \ltimes \R^{3},\\
& \R^{3},\;\;\;\;\; SO(2)\ltimes \R^3,\;\;\;\;\;(\R_+^*\times SO(2))\ltimes \R^3,\;\;\;\;\;\mathcal{S}\ltimes \mathscr{P},
\end{align*}

The action of $\mathscr{P}$ admits a codimension $1$ foliation $\mathcal{F}_\mathscr{P}$ on $\En^3$ on which every leaf is an affine $2$-plane. The $1$-parameter subgroup $SO(2)$ preserves the leaves of this foliation. Therefore, the orbits induced by $SO(2)\ltimes \mathscr{P}$ in $\En^3$ are exactly the same as those induced by $\mathscr{P}$.

Let $G=\mathcal{N}_a\ltimes \mathscr{P}$. In the one hand, the homothety factor $\R_+^*$ preserves a unique leaf of $\mathcal{F}_\mathscr{P}$, namely $\mathcal{F}_\mathscr{P}(o)$. This implies that the group $\mathcal{N}_a\ltimes \mathscr{P}\subset (\R_+^*\times SO(2))\ltimes \mathscr{P}$ preserves the leaf $\mathcal{F}_{\mathscr{P}}(o)$, as well. On the other hand, for an arbitrary point $p=(x,y,z)\in \En^3$, the vector tangent to the orbit $G(p)$ at $p$ induced by the $1$-parameter subgroup $\mathcal{N}_a$ is $v=(ax+y,-x+ay,az)$. The set $\{v,e_1,e_2\}\subset T_pG(p)$ is a basis if and only if $z\neq 0$ if and only if $p\notin \mathcal{F}_{\mathscr{P}}(o)$. Hence, $G$ acts on the both connected components of $\En^3\setminus \mathcal{F}_{\mathscr{P}}(o)$ transitively. Therefore, the following groups admit the same orbits in $\En^3$
\[\R^*_+\ltimes \mathscr{P},\;\;\;\;\;\;\;\mathcal{N}_a\ltimes \mathscr{P},\;\;\;\;\;\;\;\;\;(\R_+^*\times SO(2))\ltimes \mathscr{P}.\]

The action of $\el$ admits a $1$-dimensional foliation $\mathcal{F}_\el$ on $\En^3$ on which every leaf is an affine line. The homothety factor $\R^*_+$ and the plane-rotation group $SO(2)$ preserve a unique leaf of $\mathcal{F}_\el$, namely, $\mathcal{F}_\el(o)$. Hence, the leaf $\mathcal{F}_\el(o)$ is invariant by $(\R_+^*\times SO(2))\ltimes \el$.
\begin{itemize}
\item The group $SO(2)\times \el$ acts on $\En^3\setminus \mathcal{F}_\el(o)$, freely. Therefore, it induced a codimension one foliation on $\En^3\setminus \mathcal{F}_\el(o)$ on which every leaf is diffeomorphic to the cylinder $\Sn^1\times \R$.
\item The action of the homothety factor $\R_+^*$ preserves non of the leaves induced by $SO(2)\times \el$ in $\En^{3}\setminus \mathcal{F}_\el(o)$. It follows that $(\R_+^*\times SO(2))\ltimes \el$ acts on $\En^3\setminus \mathcal{F}_{\el}(o)$ transitively.
\item Set $G=\mathcal{N}_a\ltimes \el$. For an arbitrary point $p\in \En^3$, the orbit $G(p)$ intersects the line $l=\{(t,0,0):t\in \R\}$. It can be easily seen that $G$ acts on $\En^3\setminus \mathcal{F}_\el(o)$ freely. Thus every $G$-orbit in $\En^3\setminus \mathcal{F}_\el(o)$ is diffeomorphic to $\R^2$.
\item Every orbit induced by $G=\R_+^*\ltimes \el$ in $\En^3\setminus \mathcal{F}_\el(o)$ intersects the curve $C=\{(\cos\theta,\sin\theta,0):\theta\in \R\}$ in a unique point. Now, one can see $G$ acts on $\En^3\setminus \mathcal{F}_\el(o)$ freely. Actually, every $G$-orbit in $\En^3\setminus \mathcal{F}_\el(o)$ is an affine half-plane.
\end{itemize}

The group $SO(3)$ fixes the origin $o$ and admits a codimension 1 foliation $\mathcal{F}_\Sn$ on $\En^3\setminus \{o\}$ on which every leaf is a $2$-sphere $\Sn^2(r)$ of radius $r>0$. 

The homothety factor $\R_+^*$ preserves no leaf of $\mathcal{F}_\Sn$. Hence, $\R_+^*\times SO(3)$ acts on $\En^3\setminus \{o\}$ transitively.

Finally, the group $G=\R_+^*\times SO(2)$ preserves the line $\mathcal{F}_\el(o)$ and acts on the both connected components of $\mathcal{F}_\el(o)\setminus \{o\}$ transitively. Every $G$ orbit in $\En^3\setminus \mathcal{F}_\el(o)$ intersects the line $l=\{(0,1,t):t\in \R\}$ in a unique point. Now, it can be seen that $G$ acts on $\En^3\setminus \mathcal{F}_\el(o)$ freely. Hence, every $G$-orbit in $\En^3\setminus \mathcal{F}_\el(o)$ is diffeomorphic to $\Sn^1\times \R$.

\textbf{Proofs of Theorems \ref{thm.euc.1} and \ref{thm.euc.0}:} By Proposition \ref{prop.group}, every connected Lie subgroup $G$ of $\Conf(\En^3)$ with $\dim G\geq 2$ is conjugate to one of the groups listed in Table \ref{table5}. The above consideration shows that, if $G$ acts on $\En^{3}$ with cohomogeneity one (resp. zero), then, the $G$-orbits in $\En^3$ are exactly the same as the orbits induced by one of the groups indicated in \eqref{list.euc.1} (resp. \eqref{list.euc.0}).\hfill$\square$
\section{Actions preserving a great sphere in $\Sn^3$}\label{sec.sphe}
The actions admitting a fixed point in the sphere $\Sn^3$ are studied in Section \ref{sec.euc}. Also, if a Lie subgroup $G\subset \Conf_\circ(\Sn^3)=SO_\circ(1,4)=\Iso_\circ(\Hn^4)$ fixes a point in the hyperbolic space $\Hn^4$, then up to conjugacy, it is a subgroup of $SO(4)$. Actions induced by the subgroups of $SO(4)$ are considered in Section \ref{sec.proper}. In this section, we study the actions preserving a positive definite linear subspace $V$ of $\R^{1,4}$. 
%Let $G$ be a connected Lie subgroup of $SO_\circ(1,4)$ which preserves $V$. Then $G$ preserves the orthogonal complement $V^\perp\leq \R^{1,4}$ as well.

\begin{theorem}\label{thm.p.1}
Let $G$ be a connected Lie subgroup of $SO_\circ(1,4)$ which acts on $\Sn^3$ with cohomogeneity one. Furthermore, assume that $G$ fixes no point neither in $\Hn^{4}$ nor in $\Sn^3$. Then $G$ is conjugate to $SO_\circ(1,2)$.
\end{theorem}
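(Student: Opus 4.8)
The plan is to combine Lemma \ref{lem.sub} with the no-fixed-point hypothesis to confine $G$ to the stabilizer of a positive definite plane, and then to exploit the simplicity of $\so(1,2)\cong\s(2,\R)$ together with the dynamical constraints to pin $G$ down exactly.

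First I would reduce to the case in which $G$ preserves a positive definite \emph{$2$-plane}. By Lemma \ref{lem.sub}, since $G$ fixes no point in $\Sn^3$ or $\Hn^4$, it preserves a positive definite subspace $V\le\R^{1,4}$ with $\dim V\in\{1,2\}$. If $\dim V=2$ this step is complete. If $\dim V=1$, write $V=\R u$ with $u$ spacelike; as $G$ is connected it fixes $u$ and acts on $V^\perp\cong\R^{1,3}$, so $G\subseteq SO_\circ(1,3)$. By Theorem \ref{thm.1} the action of $G$ on $\R^{1,3}$ cannot be irreducible, for otherwise $G=SO_\circ(1,3)$, which acts on $\Sn^3$ with cohomogeneity zero (its orbits being the two open hemispheres bounded by a separating great $\Sn^2$; cf. Theorem \ref{thm.p.0}), contradicting the hypothesis. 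Hence $G$ preserves a proper nonzero subspace $W\le V^\perp$, and replacing $W$ by its orthogonal complement inside $V^\perp$ if necessary we may take $\dim W\le 2$. If $W$ contains a $G$-invariant timelike or lightlike line we obtain a fixed point in $\Hn^4$ or $\Sn^3$ (using connectedness exactly as in the proof of Lemma \ref{lem.sub}), a contradiction; the only remaining possibility is that $W$, or $\R u\oplus W$, is a positive definite $2$-plane preserved by $G$. In every case $G$ preserves a positive definite $2$-plane $V$, whose orthogonal complement $V^\perp$ is Lorentzian of signature $(1,2)$.

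Having reduced to this configuration, $G$ lies in the identity component $SO(2)\times SO_\circ(1,2)$ of the stabilizer of $V$, where the $SO(2)$ factor acts on $V$ and the factor $SO_\circ(1,2)=SO_\circ(V^\perp)$ acts on $V^\perp$ and trivially on $V$. Let $\rho:G\to SO_\circ(1,2)$ be the restriction of the projection onto the second factor; then the action of $G$ on $V^\perp$ coincides with the action of $\rho(G)$. I claim $\rho(G)=SO_\circ(1,2)$. Indeed, every proper connected subgroup of $SO_\circ(1,2)$ is, up to conjugacy, contained either in a Borel subgroup (which stabilizes a lightlike line of $V^\perp$) or in a maximal compact $SO(2)$ (which stabilizes a timelike line of $V^\perp$). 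Since the $SO(2)$ factor acts trivially on $V^\perp$, any such preserved line of $V^\perp$ would be $G$-invariant in $\R^{1,4}$, producing a fixed point of $G$ in $\Sn^3$ or $\Hn^4$, against the hypothesis. Therefore $\rho$ is onto.

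Finally I would use simplicity to conclude. The Lie algebra $\mathfrak g$ is a subalgebra of $\R\oplus\so(1,2)$ whose image under $\rho_\ast$ is all of $\so(1,2)$, and its kernel $\mathfrak g\cap(\R\oplus 0)$ is a central ideal of dimension $0$ or $1$. As $\so(1,2)$ is simple, hence perfect, every Lie homomorphism $\so(1,2)\to\R$ vanishes, so $\mathfrak g$ is either $\R\oplus\so(1,2)$ or the graph of the zero map, namely $\{0\}\oplus\so(1,2)$. The first gives $G=SO(2)\times SO_\circ(1,2)$, which acts on $\Sn^3$ with cohomogeneity zero (its generic orbits are $3$-dimensional; cf. Theorem \ref{thm.p.0}) and is thus excluded by the cohomogeneity-one hypothesis; the second gives $G=\{1\}\times SO_\circ(1,2)$, which is conjugate to the standard $SO_\circ(1,2)$. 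The routine parts are the subspace bookkeeping in the reduction and the standard list of connected subgroups of $SO_\circ(1,2)$; the main obstacle I anticipate is making the first step genuinely exhaustive — in particular excluding the $\dim V=1$ branch by identifying the cohomogeneity of the overgroups of $SO_\circ(1,2)$, and verifying that no preserved subspace escapes the dichotomy ``positive definite $2$-plane versus fixed point.'' Once $G$ is confined to $SO(2)\times SO_\circ(1,2)$, the no-fixed-point and cohomogeneity-one conditions are rigid enough that simplicity of $\so(1,2)$ forces the conclusion.
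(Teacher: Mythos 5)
Your proof is correct and follows essentially the same route as the paper: Lemma \ref{lem.sub} and Theorem \ref{thm.1} confine $G$ to the identity component $SO_\circ(1,2)\times SO(2)$ of the stabilizer of a positive definite $2$-plane (with the $\dim V=1$ branch eliminated via the cohomogeneity zero of $SO_\circ(1,3)$, exactly as in the paper), and the no-fixed-point hypothesis rules out every subgroup preserving a lightlike or timelike line of $V^\perp$. The only cosmetic difference is the finish: where the paper cites the subgroup list of Remark \ref{rem}, you pin down $\mathfrak{g}\le\R\oplus\so(1,2)$ by surjectivity of $\rho$ plus perfectness of $\so(1,2)$ --- the same graph argument the paper itself uses for $\so(3)$ in Proposition \ref{prop.group} --- and your case analysis in the $\dim V=1$ branch is in fact more explicit than the paper's terse ``same argument as in Lemma \ref{lem.sub}''.
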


\begin{theorem}\label{thm.p.0}
Let $G$ be connected Lie subgroup of $SO_\circ(1,4)$ which acts on $\Sn^3$ with cohomogeneity zero. Furthermore, assume that $G$ fixes no point neither in $\Hn^{4}$ nor in $\Sn^3$. Then $G=SO_\circ(1,4)$ or it is conjugate to either  $SO_\circ(1,3)$ or $SO_\circ(1,2)\times SO(2)$.
\end{theorem}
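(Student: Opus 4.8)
The plan is to combine Theorem~\ref{thm.1} and Lemma~\ref{lem.sub} to reduce the problem to two linear-algebraic cases, and then to pin down $G$ in each case through the classification of the maximal connected subgroups of a small Lorentz group, using the two fixed-point hypotheses to discard every unwanted candidate. First I would dispose of the irreducible case: if $G$ acts irreducibly on $\R^{1,4}$, then Theorem~\ref{thm.1} gives $G=SO_\circ(1,4)$, which is transitive on $\Sn^3$ and hence of cohomogeneity zero. So assume $G$ is a proper connected subgroup. By Lemma~\ref{lem.sub} and the hypothesis that $G$ fixes no point of $\Hn^4$ or $\Sn^3$ (which rules out the lightlike- and timelike-line alternatives), $G$ preserves a positive definite subspace $V\le\R^{1,4}$ with $\dim V\in\{1,2\}$. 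Being connected and isometric, $G$ also preserves $V^\perp$ and embeds into $SO(V)\times SO_\circ(V^\perp)$. Thus either \emph{(a)} $V$ is a spacelike line, $SO(V)=\{1\}$, and $G\hookrightarrow SO_\circ(V^\perp)=SO_\circ(1,3)=\Iso_\circ(\Hn^4)$ acting on $V^\perp\cong\R^{1,3}$; or \emph{(b)} $V$ is a spacelike plane and $G\hookrightarrow SO(2)\times SO_\circ(1,2)$ with $V^\perp\cong\R^{1,2}$.

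Next I would translate cohomogeneity zero on $\Sn^3$ into an open-orbit condition on a hyperbolic space. Projectivizing the nullcone, in case \emph{(a)} one gets $\Sn^3=\Sn^2\sqcup(\text{two open }3\text{-balls})$, where the \emph{great sphere} $\Sn^2$ is the projectivized nullcone of $V^\perp$ (a single $2$-dimensional orbit of any group transitive on null lines of $\R^{1,3}$), and each open ball is $G$-equivariantly identified with a sheet of $\{w\in V^\perp:\q(w)=-1\}\cong\Hn^3$, preserved by $G$ because $G\subseteq SO_\circ(1,3)$ preserves time orientation. Hence $G$ has cohomogeneity zero on $\Sn^3$ iff it has an open orbit on $\Hn^3$. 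In case \emph{(b)}, $\Sn^3=\Sn^1\sqcup(\Sn^1\times\Hn^2)$, where the great circle $\Sn^1$ is the projectivized nullcone of $V^\perp$ and the open complement is $\{(u,w):|u|=1,\ w\in\Hn^2\}$ with $SO(2)$ acting on the circle factor and $SO_\circ(1,2)$ on $\Hn^2$; so cohomogeneity zero means $G$ has an open orbit in $\Sn^1\times\Hn^2$. The crucial bookkeeping is that a fixed point of $G$ on $\partial\Hn^3=\Sn^2$ or on $\partial\Hn^2=\Sn^1$ is a point of $\Sn^3$, while a fixed point of $G$ in $\Hn^3$ is a point of $\Hn^4$; both are excluded by hypothesis.

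For case \emph{(a)}, note $\dim G\ge 3$ since an open orbit is $3$-dimensional. I would invoke that every proper connected subgroup of $SO_\circ(1,3)$ lies, up to conjugacy, in one of its maximal connected subgroups $SO(3)$, $SO_\circ(1,2)$, or the minimal parabolic $MAN$. A subgroup of $SO(3)$ fixes the $SO(3)$-fixed point of $\Hn^3$, hence a point of $\Hn^4$; a subgroup of $SO_\circ(1,2)$ has all orbits of dimension $\le 2$, so no open orbit; and a subgroup of $MAN$ fixes the parabolic's boundary point $\xi\in\Sn^2\subseteq\Sn^3$. Each possibility contradicts a hypothesis, forcing $G=SO_\circ(1,3)$. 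For case \emph{(b)}, let $p_2\colon SO(2)\times SO_\circ(1,2)\to SO_\circ(1,2)$. Projecting an open $G$-orbit shows $p_2(G)$ has an open orbit on $\Hn^2$; since the proper connected subgroups of $SO_\circ(1,2)$ lie in $SO(2)$ (orbits $\le 1$-dimensional) or in a Borel $AN$ (fixing a boundary point $\xi\in\Sn^1$, which, as $SO(2)$ acts trivially on $V^\perp$, is then fixed by all of $G$), the hypotheses force $p_2(G)=SO_\circ(1,2)$. If $\dim G=3$ then $p_2|_G$ is a local isomorphism, so $G$ is locally $SO_\circ(1,2)$, and the projection to the abelian factor $SO(2)$ is trivial (as $\mathfrak{sl}(2,\R)$ is perfect); then $G=\{1\}\times SO_\circ(1,2)$ acts trivially on the circle factor and has only $2$-dimensional orbits, excluding cohomogeneity zero. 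Hence $\dim G=4$ and $G=SO_\circ(1,2)\times SO(2)$.

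Finally I would confirm the forward direction, namely that each of the three groups really does act with cohomogeneity zero, directly from the orbit pictures in the second paragraph. I expect the main obstacle to be precisely that bookkeeping step: correctly matching the two fixed-point hypotheses on $\Hn^4$ and $\Sn^3$ to fixed points inside $\Hn^3$, $\Hn^2$ and on their ideal boundaries, since this is what converts the hypotheses into the exclusions that eliminate every maximal subgroup except the full group. A secondary point requiring care is the appeal to the maximal-subgroup structure of $SO_\circ(1,3)$ and $SO_\circ(1,2)$, which I would justify through the isomorphisms $\so(1,3)\cong\mathfrak{sl}(2,\C)$ and $\so(1,2)\cong\mathfrak{sl}(2,\R)$ and the observation that the only proper subalgebras up to conjugacy are the isotropy subalgebras of an interior point, of a totally geodesic hyperplane, or of a boundary point.
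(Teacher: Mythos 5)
Your proof is correct, and its skeleton coincides with the paper's: Theorem \ref{thm.1} handles the irreducible case, Lemma \ref{lem.sub} together with the two no-fixed-point hypotheses yields an invariant positive definite subspace $V$ with $\dim V\in\{1,2\}$, and the equivariant identifications $\Sn^3\setminus\Sn^2\approx\Hn^3\sqcup\Hn^3$ and $\Sn^3\setminus\Sn^1\approx\Hn^2\times\Sn^1$ (established in Section \ref{sec.sphe}) convert cohomogeneity zero into an open-orbit condition. Where you genuinely diverge is in closing the two subcases. The paper's proof (which simply defers to the proof of Theorem \ref{thm.p.1}) re-applies Theorem \ref{thm.1} inside $SO_\circ(1,3)$ when $\dim V=1$ --- a proper subgroup acts reducibly on $\R^{1,3}$, and the case analysis of Lemma \ref{lem.sub} is rerun one dimension down --- and, when $\dim V=2$, it quotes Remark \ref{rem} for the list of connected subgroups of $SO_\circ(1,2)\times SO(2)$. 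You instead invoke the classification of maximal connected subgroups of $SO_\circ(1,3)$ (conjugates of $SO(3)$, $SO_\circ(1,2)$, $MAN$) in case \emph{(a)}, and in case \emph{(b)} a projection-plus-perfectness argument ($\mathfrak{sl}(2,\R)$ is perfect, so a $3$-dimensional $G$ with $p_2(G)=SO_\circ(1,2)$ must be $\{1\}\times SO_\circ(1,2)$), which in effect reproves the content of Remark \ref{rem}. The trade-off: the paper's route needs only one external input (Di Scala--Olmos, used twice), whereas yours imports the classical, unproved-but-standard subalgebra structure of $\mathfrak{sl}(2,\C)$ and $\mathfrak{sl}(2,\R)$, and in exchange is far more explicit about why each excluded subgroup violates one of the two hypotheses --- exactly the bookkeeping the paper leaves implicit. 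Three harmless slips to correct: $SO_\circ(V^\perp)=\Iso_\circ(\Hn^3)$, not $\Iso_\circ(\Hn^4)$; a $G$-fixed point in an $\Hn^3$-component of $\Sn^3\setminus\Sn^2$ is already itself a point of $\Sn^3$ (it also yields a fixed point in $\Hn^4$ via the $G$-fixed timelike vector of $V^\perp$, so either hypothesis excludes it); and your final sentence should say that every proper subalgebra is \emph{contained in} one of the three isotropy subalgebras, these being the maximal ones up to conjugacy, rather than that they are the only proper subalgebras.
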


Let $G$ be a connected Lie subgroup of $\Conf(\Sn^3)=O(1,4)$ which preserves a positive definite linear subspace $V\leq \R^{1,4}$. The group $G$ preserves the orthogonal complement $V^\perp\leq \R^{1,4}$ as well, which is a Lorentzian subspace. There are two cases:

%$1$-dimensional spacelike linear subspace $V$ of $\R^{1,4}$. It can be seen easily that $G$ acts on $V$ trivially. The group $G$ preserves also, 
\begin{itemize}
\item[-] If $V$ is $1$-dimensional, then $G$ acts on $V$ trivially, and so, it is a subgroup of $SO_\circ(1,3)$, up to conjugacy. The orthogonal complement $V^\perp$ is a Lorentzian space of signature $(1,3)$. The projection by $\Pn$ of the nullcone $\Nu^{1,3}$ of $V^\perp\approx\R^{1,3}$ is a great $2$-sphere $\Sn^2$. We show that the complement of $\Sn^2$ in $\Sn^3$ has two connected components each of them conformally equivalent to the $3$-dimensional hyperbolic space $\Hn^3$. Let $\{e_1,e_2,e_3,e_4\}$ be an orthonormal basis for $V^\perp$ which $e_1$ is a timelike vector, and let $e_5\in V$ be a unit vector. The set $B=\{e_1,\cdots,e_5\}$ is an orthonormal basis of $\R^{1,4}$. For an arbitrary point $p\in \Nu^{1,4}\subset \R^{1,4}$ we denote its coordinate respect to the basis $B$ with $(t,x,y,z,u)$. We have 
\begin{align}\label{eq.1}
p\in \Nu^{1,4}\Longrightarrow -t^2+x^2+y^2+z^2=-u^2.
\end{align}
 If $u=0$, then $p\in \Nu^{1,3}\subset V^\perp\approx\R^{1,3}$, and so $\Pn(p)\in \Sn^2$. If $u\neq 0$, dividing $p$ by $|u|$ we may assume 
 \[-t^2+x^2+y^2+z^2=-1.\]
 which describes the $3$-dimensional hyperbolic space $\Hn^3$. One component corresponds to $u<0$ and other to $u>0$. It can be easily seen that this identification is $SO_\circ(1,3)$-equivariant with respect to the natural action of $SO_\circ(1,3)$ on $\Hn^3$. Thus, $SO_\circ(1,3)$ acts on $\Sn^2$ and the both connected components of $\Sn^3\setminus \Sn^2$ transitively.
 \item[-] If $V$ has dimension $2$, then its orthogonal complement $V^\perp \leq \R^{1,4}$ is a $3$-dimensional Lorentzian space. This induces a representation from $G$ into $SO_\circ(1,2)\times SO(2)$. The projection by $\Pn$ of the nullcone $\Nu^{1,2}$ of $V^\perp\approx\R^{1,2}$ is a great circle $\Sn^1$. We show that the complement of $\Sn^1$ in $\Sn^3$ is conformally equivalent to the direct product $\Hn^2\times \Sn^1$. Let $\{e_1,e_2,e_3\}$ be an orthonormal basis of $V^\perp$ which $e_1$ is a timelike vector. Also, assume that $\{e_4,e_5\}$ is an orthonormal basis of $V$. The set $B=\{e_1,\cdots,e_5\}$ is an orthonormal basis of $\R^{1,4}$. For an arbitrary point $p=(t,x,y,z,u)\in \Nu^{1,4}$ in coordinate $B$, we have 
\[-t^2+x^2+y^2=-z^2-u^2.\]
If $p\in V^{\perp}$ (i.e. $z=u=0$), $\Pn(p)$ belongs to $\Pn(\Nu^{1,2})=\Sn^1\subset \Sn^3$.
% The group $SO_\circ(1,2)$ acts on $\Sn^1$ transitively.
 If $p\notin V^\perp$, dividing $p$ by $\sqrt{z^2+u^2}$ we may assume
\[-t^2+x^2+y^2=-z^2-u^2=-1,\]
which describes the direct product $\Hn^2\times \Sn^1$. Note that, this identification is $SO_\circ(1,2)\times SO(2)$-equivariant where the action of $SO_\circ(1,2)\times SO(2)$ on $\Hn^2\times \Sn^1$ is the one induced from the direct product structure. This shows that, $SO_\circ(1,2)\times SO(2)$ acts on the complement $\Sn^3\setminus \Sn^1$ transitively.
\end{itemize}

\begin{remark}\label{rem}
It is well-known that, the group $SO_\circ(1,2)$ has a unique $2$-dimensional connected Lie subgroup $\Aff$ called affine group. It consists of parabolic and hyperbolic elements and preserves a lightlike line in $\R^{1,2}$. Also, $SO_\circ(1,2)$, has three distinct $1$-parameter subgroups; an elliptic (resp. parabolic, hyperbolic) subgroup $\mathcal{E}$ (resp. $\mathcal{H}$, $\mathcal{P}$) preserving a unique timelike line (resp. a unique lightlike line, two distinct lightlike lines). It follows that every connected proper Lie subgroup of $SO_\circ(1,2)\times SO(2)$ with $\dim \geq 2$, is conjugate to $SO_\circ(1,2)$ or $\mathcal{E}\times SO(2)$, or it is a subgroup of $\Aff\times SO(2)$, up to conjugacy.
\end{remark}
Now, we prove Theorems \ref{thm.p.1} and \ref{thm.p.0}.

\textbf{Proof of Theorem \ref{thm.p.1}.}
By Theorem \ref{thm.1}, $G$ acts on $\R^{1,4}$ reducibly, since $SO_\circ(1,4)$ acts on $\Sn^3$ transitively. Let $V\leq \R^{1,4}$ be a non-trivial $G$-invariant subspace. The group $G$ fixes no point neither in $\Hn^4$ nor in $\Sn^3$. Thus, by Lemma \ref{lem.sub}, $V$ (or its orthogonal $V^\perp$) is a positive definite subspace of dimension 1 or 2.

If $\dim V=2$, then $G$ is a subgroup of $SO_\circ(1,2)\times SO(2)$, up to conjugacy. So, it preserves a great circle $\Sn^1\subset \Sn^3$. By Remark \ref{rem}, the only possible case for $G$ is $SO_\circ(1,2)$, since $SO_\circ(1,2)\times SO(2)$ acts on the complement $\Sn^3\setminus \Sn^1\approx \Hn^2\times \Sn^1$ transitively, and the other subgroups admit a fixed point either in $\Hn^4$ or in $\Sn^3$. The group $SO_\circ(1,2)$ acts on the great circle $\Sn^1$ transitively. Also, it acts on the $\Hn^2$-factor (resp. $\Sn^1$-factor) of $\Hn^2\times \Sn^1$ transitively (resp. trivially). Therefore, it admits a codimension $1$ foliation in $\Hn^2\times \Sn^1$. It follows that $G=SO_\circ(1,2)$, up to conjugacy.

If $V$ is $1$-dimensional, then $G$ is a subgroup of $SO_\circ(1,3)$, up to conjugacy. Observe that $SO_\circ(1,3)$ acts on $\Sn^3$ with cohomogeneity zero. So, by Theorem \ref{thm.1}, $G$ acts on $\R^{1,3}$ reducibly. Using the same argument as we used in Lemma \ref{lem.sub}, one can show $G=SO_\circ(1,2)$ (which is studied in the previous paragraph).
\hfill$\square$

\textbf{Proof of Theorem \ref{thm.p.0}:}
If $G$ acts on $\R^{1,4}$ irreducibly, then by Theorem \ref{thm.1}, $G=SO_\circ(1,4)$ which acts on $\Sn^3$ transitively. If $G$ acts on $\R^{1,4}$ reducibly, using the same argument as we used in the proof of Theorem \ref{thm.p.1}, one can show that $G$ is conjugate either to $SO_\circ(1,2)\times SO(2)$ or $SO_\circ(1,3)$.\hfill$\square$

\bibliographystyle{amsplain}

\begin{thebibliography}{10}

\bibitem{Adams1} S. Adams, Dynamics on Lorentz Manifolds, {\it world Scientific}, 2001.

\bibitem{AK1} P. Ahmadi, S.M.B. Kashani, Cohomogeneity one de Sitter space $S^n_1$, {\it Acta Math. Sin.} {\bf 26} (10) (2010), 1915-1926.

\bibitem{AK} P. Ahmadi, S.M.B. Kashani, Cohomogeneity one Minkowski space $\R^n_1$, {\it  Publ. Math. Debr.} {\bf 78}(1) (2011), 49-59.

\bibitem{A3} P. Ahmadi, Cohomogeneity one three dimensional anti-de Sitter space, proper and nonproper actions, \textit{Differential Geom. Appl.} {\bf 39} (2015), 93-112. 

\bibitem{Alee} D. V. Alekseevsky, On a proper action of a Lie group,
{\it Uspekhi Math. Nauk}  {\bf 34} (1979), 219-220.

\bibitem{AA}  A.V.Alekseevsky and D.V.Alekseevsky, $G$-manifolds
with one dimensional orbit space, {\it Adv. Sov. Math.} {\bf{8}} (1992),
1-31.


\bibitem{BDV} J. Berndt, J. Díaz-Ramos and M.J. Vanaei, Cohomogeneity one actions on Minkowski spaces, \textit{Monatsh. Math.} \textbf{184} (2) (2017), 185-200. 

\bibitem{BD} J. Berndt, and M. Domínguez-Vázquez,  Cohomogeneity one actions on some noncompact symmetric spaces of rank two, \textit{Transform. Groups} \textbf{20} (4) (2015), 921-938.


\bibitem{BT} J. Berndt and H. Tamaru, Cohomogeneity one actions on noncompact symmetric spaces of rank one, \textit{Trans. Amer. Math. Soc.} \textbf{359} (2) (2007), 3425?3438.

\bibitem{Br}  G.E. Bredon, Introduction to compact
transformation groups. {\it Academic Press, New York,} 1972.

\bibitem{vanei1} J. C. Díaz-Ramos, S. M. B. Kashani and M. J. Vanaei,  Cohomogeneity one actions on anti de Sitter spacetimes, {\it Results Math.} {\bf 72} (2017), 515-536. 

\bibitem{Di2} A.J. Di Scala, T. Leistner and T. Neukirchner, Geometric applications of irreducible representations of Lie, in Handbook of Pseudo-Riemannian Geometry, IRMA Lectures in Mathematics and Theoretical Physics. \textit{European Mathematical Society}, 2010.

%\bibitem{Ferrand} J. Ferrand, The action of conformal transformations on a Riemannian manifold, \textit{Math. Ann.} \textbf{304} (2) (1996) 277?291. 

\bibitem{Di1}
A.J. Di Scala and C. Olmos, The geometry of homogeneous submanifolds of hyperbolic space, \textit{Math. Z.} \textbf{237} (1) (2001), 199-209. 



\bibitem{Hat}   A. Hatcher, Algebraic topology, {\it Cambridge
	University Press}, 2002.

\bibitem{Has} M. Hassani, On the irreducible action of ${\rm PSL}(2,\R)$ on the $3$-dimensional Einstein Universe, \textit{C. R. Acad. Sci. Paris, Ser. I} {\bf 355} (2017) 1133-1137.

\bibitem{HA} M. Hassani and P. Ahmadi, Cohomogeneity one actions on the three-dimensional Einstein universe, (2018), https://arxiv.org/abs/1810.00575.

\bibitem{Kow} N. Kowalsky, Noncompact simple automorphism groups of Lorentz manifolds and other geometric manifolds, {\it Ann. of Math.} {\bf 144} (3) (1996) 611-640.

\bibitem{MN} F. Mercuri and M. H. Noronha, Conformal  flatness,  cohomogeneity  one  and hypersurfaces  of  revolution, \textit{Differential Geom. Appl.} \textbf{9} (3) (1998) 243-249.

\bibitem{MK}  R. Mirzaie and S. M. B. Kashani, On cohomogeneity one flat Riemannian manifolds,
{\it Glasgow Math. J.} {\bf 44} (2002), 185-190.


\bibitem{Mos}  P.S. Mostert, On a compact Lie group acting on a manifold,
{\it Ann. Math.} Vol.{\bf{65}} (3) (1957), 447-455.

\bibitem{Palais} R.S. Palais, On the existence of slices for actions of non-compact Lie groups, {\it Ann. Math.}{\bf 73}, (1961) 295-323.


\bibitem{PT}   R. S. Palais and CH. L. Terng, A general theory of
canonical forms, {\it Trans. Am. Math. Soc.} {\bf{300}} (1987),
771-789.

\bibitem{PS}  F. Podesta and A. Spiro, Some topological properties of chomogeneity one manifolds with negative curvature, {\it Ann. Global Anal. Geom.} {\bf 14} (1996), 69-79.

\bibitem{S}  C. Searle, Cohomogeneity and positive curvature in low dimension, {\it Math. Z.} {\bf 214} (1993), 491-498.


\bibitem{vanei2} M. J. Vanaei, S. M. B. Kashani and E. Straume,  Cohomogeneity one anti de Sitter space $AdS^{n+1}$, {\it Lobachevskii J. Math.} {\bf  37} (2) (2016), 204-213.

\bibitem{Zeg} A. Zeghib, The identity component of the isometry group of a compact Lorentz manifold, {\it Duke Math. J.} {\bf 92} (2) (1998), 321-333.

\bibitem{Zim} R. J. Zimmer, On the automorphism group of a compact Lorentz manifold and other geometric manifolds, {\it Invent. Math.} {\bf 83} (1986) 411-424.
 \end{thebibliography}

\end{document}